\numberwithin{equation}{section}
\setlist{itemsep=4pt, topsep=4pt}
\def\chaptermark#1{}
\def\chapter{%
  \if@openright\cleardoublepage\else\clearpage\fi
  \thispagestyle{plain}\global\@topnum\z@
  \@afterindenttrue \secdef\@chapter\@schapter}
\def\@chapter[#1]#2{\refstepcounter{chapter}%
  \ifnum\c@secnumdepth<\z@ \let\@secnumber\@empty
  \else \let\@secnumber\thechapter \fi
  \typeout{\chaptername\space\@secnumber}%
  \def\@toclevel{0}%
  \ifx\chaptername\appendixname \@tocwriteb\tocappendix{chapter}{#2}%
  \else \@tocwriteb\tocchapter{chapter}{#2}\fi
  \chaptermark{#1}%
  \addtocontents{lof}{\protect\addvspace{10\p@}}%
  \addtocontents{lot}{\protect\addvspace{10\p@}}%
  \@makechapterhead{#2}\@afterheading}
\def\@schapter#1{\typeout{#1}%
  \let\@secnumber\@empty
  \def\@toclevel{0}%
  \ifx\chaptername\appendixname \@tocwriteb\tocappendix{chapter}{#1}%
  \else \@tocwriteb\tocchapter{chapter}{#1}\fi
  \chaptermark{#1}%
  \addtocontents{lof}{\protect\addvspace{10\p@}}%
  \addtocontents{lot}{\protect\addvspace{10\p@}}%
  \@makeschapterhead{#1}\@afterheading}
\newcommand\chaptername{Chapter}
\def\@makechapterhead#1{\global\topskip 7.5pc\relax
  \begingroup
  \fontsize{\@xivpt}{18}\bfseries\centering
    \ifnum\c@secnumdepth>\m@ne
      \leavevmode \hskip-\leftskip
      \rlap{\vbox to\z@{\vss
          \centerline{\normalsize\mdseries
              \uppercase\@xp{\chaptername}\enspace\thechapter}
          \vskip 3pc}}\hskip\leftskip\fi
     #1\par \endgroup
  \skip@34\p@ \advance\skip@-\normalbaselineskip
  \vskip\skip@ }
\def\@makeschapterhead#1{\global\topskip 7.5pc\relax
  \begingroup
  \fontsize{\@xivpt}{18}\bfseries\centering
  #1\par \endgroup
  \skip@34\p@ \advance\skip@-\normalbaselineskip
  \vskip\skip@ }
\def\appendix{\par
  \c@chapter\z@ \c@section\z@
  \let\chaptername\appendixname
  \def\thechapter{\@Alph\c@chapter}}
\newcounter{chapter}
\newif\if@openright
\def\@cite#1#2{{\m@th\upshape\bfseries%
[{#1\if@tempswa{\m@th\upshape\mdseries, #2}\fi}]}}
\theoremstyle{plain}
\newtheorem{thm}{Theorem}[section]
\newtheorem{cor}[thm]{Corollary}
\newtheorem{prop}[thm]{Proposition}
\newtheorem{theorem}[thm]{Theorem}
\theoremstyle{definition}
\newtheorem{definition}[thm]{Definition}
\theoremstyle{remark}
\newtheorem{rem}[thm]{Remark}
\newtheorem*{remark}{Remark}
\numberwithin{equation}{subsection}
\newcommand{\nc}{\newcommand}
\newcommand{\rnc}{\renewcommand}
\newcommand{\Teich}[1][g,n]{\mathcal{T}_{#1}}
\newcommand{\TeichV}{\mathcal{T}_{g,n}\times V_{\Sigma, S}}
\newcommand{\Mod}[1][g,n]{\mathrm{Mod}_{#1}}
\newcommand{\moduli}[1][g,n]{\mathcal{M}_{#1}}
\newcommand{\TeichD}{\widetilde{\mathcal{D}}_{g,n}}
\newcommand{\ModD}{{\mathcal{D}}_{g,n}}
\newcommand{\TeichA}{\widetilde{\mathcal{A}}_{g,n}}
\newcommand{\ModA}{{\mathcal{A}}_{g,n}}
\newcommand{\ModF}{{\mathcal{F}}_{g,n}}
\nc\bA{\mathbb{A}}
\nc\bB{\mathbb{B}}
\nc\bC{\mathbb{C}}
\nc\bD{\mathbb{D}}
\nc\bE{\mathbb{E}}
\nc\bF{\mathbb{F}}
\nc\bG{\mathbb{G}}
\nc\bH{\mathbb{H}}
\nc\bI{\mathbb{I}}
\nc{\bJ}{\mathbb{J}} 
\nc\bK{\mathbb{K}}
\nc\bL{\mathbb{L}}
\nc\bM{\mathbb{M}}
\nc\bN{\mathbb{N}}
\nc\bO{\mathbb{O}}
\nc\bP{\mathbb{P}}
\nc\bQ{\mathbb{Q}}
\nc\bR{\mathbb{R}}
\nc\bS{\mathbb{S}}
\nc\bT{\mathbb{T}}
\nc\bU{\mathbb{U}}
\nc\bV{\mathbb{V}}
\nc\bW{\mathbb{W}}
\nc\bY{\mathbb{Y}}
\nc\bX{\mathbb{X}}
\nc\bZ{\mathbb{Z}}
\nc\cA{\mathcal{A}}
\nc\cB{\mathcal{B}}
\nc\cC{\mathcal{C}}
\rnc\cD{\mathcal{D}}
\nc\cE{\mathcal{E}}
\nc\cF{\mathcal{F}}
\nc\cG{\mathcal{G}}
\rnc\cH{\mathcal{H}}
\nc\cI{\mathcal{I}}
\nc{\cJ}{\mathcal{J}} 
\nc\cK{\mathcal{K}}
\rnc\cL{\mathcal{L}}
\nc\cM{\mathcal{M}}
\nc\cN{\mathcal{N}}
\nc\cO{\mathcal{O}}
\nc\cP{\mathcal{P}}
\nc\cQ{\mathcal{Q}}
\rnc\cR{\mathcal{R}}
\nc\cS{\mathcal{S}}
\nc\cT{\mathcal{T}}
\nc\cU{\mathcal{U}}
\nc\cV{\mathcal{V}}
\nc\cW{\mathcal{W}}
\nc\cY{\mathcal{Y}}
\nc\cX{\mathcal{X}}
\nc\cZ{\mathcal{Z}}
\nc\bfA{\mathbf{A}}
\nc\bfB{\mathbf{B}}
\nc\bfC{\mathbf{C}}
\nc\bfD{\mathbf{D}}
\nc\bfE{\mathbf{E}}
\nc\bfF{\mathbf{F}}
\nc\bfG{\mathbf{G}}
\nc\bfH{\mathbf{H}}
\nc\bfI{\mathbf{I}}
\nc{\bfJ}{\mathbf{J}} 
\nc\bfK{\mathbf{K}}
\nc\bfL{\mathbf{L}}
\nc\bfM{\mathbf{M}}
\nc\bfN{\mathbf{N}}
\nc\bfO{\mathbf{O}}
\nc\bfP{\mathbf{P}}
\nc\bfQ{\mathbf{Q}}
\nc\bfR{\mathbf{R}}
\nc\bfr{\boldsymbol{r}}
\nc\bfS{\mathbf{S}}
\nc\bfT{\mathbf{T}}
\nc\bfU{\mathbf{U}}
\nc\bfV{\mathbf{V}}
\nc\bfW{\mathbf{W}}
\nc\bfY{\mathbf{Y}}
\nc\bfX{\mathbf{X}}
\nc\bfZ{\mathbf{Z}}
\nc\bfmu{\boldsymbol{\mu}}
\nc{\ModX}{\mathrm{Mod}(X)}
\nc{\QuadX}{\mathrm{Quad}(X)}
\nc{\Ztwo}{\mathbb{Z}/2\mathbb{Z}}
\nc{\del}{\partial}
\nc{\dmo}{\DeclareMathOperator}
\nc{\wt}{\widetilde}
\rnc{\Re}{\operatorname{Re}}
\rnc{\Im}{\operatorname{Im}}
\rnc{\span}{\operatorname{span}}
\dmo{\rank}{rank}
\dmo{\End}{End}
\dmo{\Hom}{Hom}
\dmo{\Jac}{Jac}
\dmo{\Id}{Id}
\dmo{\Ann}{Ann}
\dmo{\Area}{Area}
\dmo{\CP}{\bC P^1}
\dmo{\rk}{rk}
\dmo{\rel}{rel}
\dmo{\ra}{\rightarrow}
\dmo{\Twist}{\mathrm{Twist}}
\dmo{\TwistX}{\mathrm{Twist}(X, \omega)}
\dmo{\Pic}{Pic}
\dmo{\Res}{Res}
\rnc{\Col}{\operatorname{Col}}
\nc{\ColOne}{\Col_{\bfC_1}}
\nc{\ColOneX}{\ColOne(X,\omega)}
\nc{\ColTwo}{\Col_{\bfC_2}}
\nc{\ColTwoX}{\ColTwo(X,\omega)}
\nc{\ColThree}{\Col_{\bfC_3}}
\nc{\ColThreeX}{\ColThree(X,\omega)}
\nc{\ColOneTwo}{\Col_{\bfC_1, \bfC_2}}
\nc{\ColOneTwoX}{\ColOneTwo(X,\omega)}
\nc{\ColOneThree}{\Col_{\bfC_1, \bfC_3}}
\nc{\ColOneThreeX}{\ColOneThree(X,\omega)}
\nc{\MOne}{\cM_{\bfC_1}}
\nc{\MTwo}{\cM_{\bfC_2}}
\nc{\MOneTwo}{\cM_{\bfC_1, \bfC_2}}
\nc{\MThree}{\cM_{\bfC_3}}
\nc{\MOneThree}{\cM_{\bfC_1, \bfC_3}}
\dmo{\For}{\cF}
\nc{\GL}{\mathrm{GL}^+(2, \bR)}
\newcommand{\cx}{{\mathbb C}}
\newcommand{\zed}{\mathbb{Z}}
\newcommand{\bfm}{\boldsymbol{m}}
\renewcommand{\flat}[1][\chi, \bfm]{L_{#1}}
\DeclareMathOperator{\ord}{ord}
\newcommand{\Teichmuller}{Teichm\"uller\xspace}
\title[Affine and Dilation Surfaces]{Moduli Spaces of Complex Affine and Dilation Surfaces}
\author[Apisa]{Paul~Apisa}
\address{Department of Mathematics, University of Michigan, Ann Arbor, MI 48104, USA}
\email{paul.apisa@gmail.com}
\thanks{During the preparation of this paper, the first author was partially supported by NSF Postdoctoral Fellowship DMS 1803625.}
\author[Bainbridge]{Matt~Bainbridge}
\address{Department of Mathematics, Indiana University, Bloomington, IN 47405, USA}
\email{bainbridge.matt@gmail.com}
\thanks{Research of the second author is supported in part by the Simons Foundation grant \#713192.}
\author[Wang]{Jane~Wang}
\address{Department of Mathematics, Indiana University, Bloomington, IN 47405, USA}
\email{wangjan@iu.edu}
\date{\today}
\begin{document}
\maketitle
\thispagestyle{empty}

\begin{abstract}
We construct moduli spaces of complex affine and dilation surfaces. Using ideas of Veech \cite{veech93}, we show that the the moduli space $\ModA(\bfm)$ of genus $g$ affine surfaces with cone points of complex order $\bfm = (m_1\ldots, m_n)$ is a holomorphic affine bundle over $\moduli$, and the moduli space $\ModD(\bfm)$ of dilation surfaces is a covering space of $\moduli$.

We then classify the connected components of $\ModD(\bfm)$ and show that it is an orbifold-$K(G, 1)$, where $G$ is the framed mapping class group of \cite{CalderonSalter2020}.
\end{abstract}

\setcounter{tocdepth}{2} 
\tableofcontents

\vspace{-1cm}

\section{Introduction}

Given a Riemann surface $X$, a \emph{complex affine structure} is a
maximal atlas of charts whose transition functions belong to the group
$\mathrm{Aff}(\mathbb{C})$ of complex-affine automorphisms
$f(z) = az+b$ of the complex plane.  In other words, an affine
structure is an $(\mathrm{Aff}(\mathbb{C}), \mathbb{C})$-structure
compatible with the complex structure on $X$.  If $X$ is closed, such
an affine structure exists if and only if $X$ has genus one.

To obtain affine structures on higher genus closed surfaces, one must
allow isolated singularities.  A \emph{branched complex affine
  structure} on $X$ is an affine structure on the complement of a
discrete set $P$.  We require that this structure is meromorphic in
the sense that in local coordinates $z$ near each point
$p$ in $P$, the affine structure is of the form $z^m g(z)dz$ for
some complex number $m$ and nonzero holomorphic function $g$.  (The
charts of the affine structure near $p$ are given by branches of
$\int z^mg(z) dz$.)  We say that $p$ is a \emph{cone point of order
  $m$}.

If $X$ is closed of genus $g$, a branched complex affine structure
must have finitely many cone points whose orders sum to $2g-2$.  As we
are primarily interested in closed surfaces of genus $g>1$, where
every affine structure is branched, we will subsequently refer to
these simply as affine structures.

Let $\TeichA$ denote the Teichm\"uller space of affine surfaces, the
set of equivalence classes of affine surfaces $X$ having labeled cone
points $P = \{p_1, \ldots, p_n\}$, together with a homotopy class of
marking $(\Sigma_g, S) \to (X, P)$, where $\Sigma_g$ is a fixed genus
$g$ surface with a set $S$ of $n$ distinguished points.  Given a tuple
$\boldsymbol{m} = (m_i)_{i=1}^n$ of complex numbers that sum to
$2g-2$, let $\TeichA(\bfm)\subset \TeichA$ denote the locus of affine
surfaces where each $p_i$ has order $m_i$.  Similarly, define the
corresponding moduli spaces of affine surfaces $\ModA$ and
$\ModA(\bfm)$ as the respective quotients by the mapping class group
$\Mod$.  Since a complex affine surface has an underlying conformal
structure, there are natural forgetful maps $\TeichA\to \Teich$
and $\ModA\to \moduli$ to the corresponding Teichm\"uller and
moduli spaces, and similarly for $\TeichA(\bfm)$ and $\ModA(\bfm)$.

Veech constructed these moduli space of affine surfaces and proved
many foundational results on their structure in his monumental paper,
\cite{veech93}.  Given a closed Riemann surface $X$, Veech showed that
the set $A(X, P, \bfm)$ of affine structures on $X$, having fixed
cone points $P$ and orders $\bfm$ summing to $2g-2$, is itself an affine
space which is modeled on the space $\Omega(X)$ of holomorphic
one-forms on $X$.  This affine structure is induced by the
\emph{exponential action} of $\Omega(X)$ on $A(X, P, \bfm)$, which we
introduce in \S\,2.  Veech moreover showed that these moduli space of
affine structures are smooth complex manifolds.

Veech's results in other words tell us that the fibers of the
forgetful maps $\TeichA(\bfm)\to \Teich$ and $\ModA(\bfm)\to \moduli$
are $g$-dimensional bundles of affine spaces (mod the action of finite
automorphism groups in the second case).  This strongly suggests that
these moduli spaces are actually holomorphic affine bundles, though
Veech did not consider the global structure of these forgetful maps.
In \S\,3, we give a brief self-contained construction of these moduli
spaces as complex analytic manifolds, and show that they are in fact
affine bundles.

\begin{theorem}\label{thm:affine_covering}
  The forgetful maps $\TeichA(\bfm)\to \Teich$ and $\TeichA\to \Teich$
  are trivial holomorphic affine bundles, i.e. they have global
  holomorphic sections, with the fiber over a point representing a
  surface $(X, P)$ modeled on the space $\Omega(X)$ of holomorphic
  $1$-forms and, respectively, the space $\Omega(X, P)$ of holomorphic
  one-forms on $X$ which may have at worst simple poles along $P$.
  
  The forgetful maps $\ModA(\bfm)\to \moduli$ and $\ModA\to \moduli$
  are holomorphic affine bundles (in the orbifold category) whose
  fiber over a surface $(X, P)$ can be identified with
  $\Omega(X)/\mathrm{Aut}(X,P)$ and $\Omega(X, P)/\mathrm{Aut}(X,P)$
  respectively, where $\mathrm{Aut}(X,P)$ is the group of
  automorphisms of $X$ that fix $P$ pointwise. 
\end{theorem}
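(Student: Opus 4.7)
The plan is to prove this in two stages: first, establish the result for the Teichm\"uller spaces $\TeichA(\bfm) \to \Teich$ and $\TeichA \to \Teich$ by identifying each as a holomorphic torsor over an appropriate Hodge-type bundle and then invoking Stein theory to trivialize; second, descend to the moduli space versions via the $\Mod$-action.

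\textbf{Step 1 (bundle structure over Teichm\"uller space).} By Veech's exponential action recalled in \S 2, each fiber $A(X,P,\bfm)$ is naturally an affine space modeled on $\Omega(X)$. Let $\cH \to \Teich$ be the Hodge bundle whose fiber over $(X,P)$ is $\Omega(X)$; this is a holomorphic vector bundle by standard Teichm\"uller theory. To upgrade Veech's fiberwise picture to a global affine bundle structure on $\TeichA(\bfm) \to \Teich$, I would verify that the total exponential action $\cH \times_{\Teich} \TeichA(\bfm) \to \TeichA(\bfm)$ is a holomorphic morphism of complex manifolds. This is a local check: the exponential action is defined in local coordinates by integration and pointwise multiplication of data that varies holomorphically in families of Riemann surfaces with marked points. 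An entirely parallel argument applies to $\TeichA \to \Teich$, replacing $\cH$ by the holomorphic vector bundle over $\Teich$ whose fiber at $(X,P)$ is $\Omega(X,P)$ (a vector bundle of rank $g + n - 1$, since dimension is constant by Riemann--Roch).

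\textbf{Step 2 (triviality).} Isomorphism classes of holomorphic $\cH$-torsors over $\Teich$ are classified by $H^1(\Teich, \cO(\cH))$. Since $\Teich$ is a Stein manifold (for instance, by the Bers embedding as a bounded domain of holomorphy in $\bC^{3g-3+n}$), Cartan's theorem B gives $H^1(\Teich, \cO(\cH)) = 0$. Hence the torsor admits a global holomorphic section, which trivializes $\TeichA(\bfm)$ as a holomorphic affine bundle over $\Teich$. The argument for $\TeichA$ is identical.

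\textbf{Step 3 (descent to moduli).} The mapping class group $\Mod$ acts holomorphically on $\TeichA(\bfm)$ by pullback of affine structures, equivariantly with respect to the forgetful map and with respect to the exponential action (where $\Mod$ acts on $\cH$ by pullback of $1$-forms). The stabilizer of $(X,P) \in \Teich$ is $\mathrm{Aut}(X,P)$, which acts $\bC$-linearly on $\Omega(X)$ by pullback; under the trivialization from Step 2, this induces an affine action on $A(X,P,\bfm)$. Taking quotients produces the asserted holomorphic affine bundle in the orbifold category $\ModA(\bfm) \to \moduli$, with orbifold fiber $\Omega(X)/\mathrm{Aut}(X,P)$, and analogously for $\ModA \to \moduli$.

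The main obstacle I expect is Step 1: checking that Veech's pointwise exponential action globalizes to a holomorphic morphism of complex manifolds. While the fiberwise construction is quite explicit, establishing holomorphicity in families requires carefully tracking local primitives and branches as $(X,P)$ varies in $\Teich$, together with a concrete description of the complex-analytic structure on $\TeichA(\bfm)$ to check compatibility. Once this is in place, Step 2 is immediate from Stein theory and Step 3 reduces to bookkeeping about the $\mathrm{Aut}(X,P)$-action on fibers.
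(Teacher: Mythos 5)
Your overall strategy---fiberwise affine structure from the exponential action, Cartan's Theorem B on the Stein manifold $\Teich$ to kill $H^1(\Teich,\cO(\cH))$ and trivialize the torsor, then descent by $\Mod$-equivariance---is the same as the paper's. However, there are two concrete gaps. First, you correctly identify Step 1 (holomorphicity of the globalized exponential action, equivalently the local product structure) as the main obstacle, but you do not resolve it. The paper's route is different and worth noting: it first proves, as a separate theorem, that $\TeichA(\bfm)$ and $\TeichA$ are complex submanifolds of $B = \Teich\times V_{\Sigma,S}$ and that the forgetful maps are holomorphic submersions (via the relative Jacobian section $\sigma$ and the surjectivity of $H^1(X,\cx)\to H^1(X,\cO_X)$). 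The implicit function theorem then supplies local holomorphic sections $s\colon U\to\TeichA(\bfm)$, and the map $(u,\alpha)\mapsto \alpha\cdot s(u)$ is checked to be a biholomorphism onto $\pi^{-1}(U)$ using Theorem~\ref{thm:veech} for fiberwise bijectivity. Without either this submersion input or a genuine verification of holomorphicity of the family exponential action, your Step 1 remains an assertion rather than a proof.

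Second, your Step 3 does not actually justify the identification of the orbifold fiber with $\Omega(X)/\mathrm{Aut}(X,P)$. The fiber $A(X,P,\bfm)$ is an affine space on which $\mathrm{Aut}(X,P)$ acts by affine (not a priori linear) transformations, and the global section produced by Stein theory in Step 2 is not $\Mod$-equivariant, so it cannot be used to linearize this action. What is needed is an $\mathrm{Aut}(X,P)$-fixed point in each fiber: the paper takes the unique flat structure with cone data $\bfm$ (Corollary~\ref{C:Troyanov}), which is canonically attached to $(X,P,\bfm)$ and hence fixed by $\mathrm{Aut}(X,P)$; choosing it as the origin makes the exponential action an $\mathrm{Aut}(X,P)$-equivariant bijection $\Omega(X)\to A(X,P,\bfm)$, and only then does the quotient fiber become $\Omega(X)/\mathrm{Aut}(X,P)$. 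Your proposal omits this idea entirely.
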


In other terms, we show that the sheaves of sections for these
forgetful maps are torsors for the sheaf of sections of the Hodge
bundle or the extended Hodge bundle, allowing simple poles at the
marked points, over the respective bases.

Let $\mathrm{Aff}_{\mathbb{R}_+}(\mathbb{C})$ denote the subgroup of
$\mathrm{Aff}(\mathbb{C})$ consisting of maps of the form
$f(z) = az + b$ where $a$ is a positive real number. An affine
surface is called a \emph{dilation surface} if the transition
functions for the atlas defining the
$(\mathrm{Aff}(\mathbb{C}), \mathbb{C})$-structure belong to
$\mathrm{Aff}_{\mathbb{R}_+}(\mathbb{C})$ (a choice of ``horizontal
direction'' is often taken to be part of the structure of a dilation
surface, but we do not do so here). We will let $\ModD(\bfm)$
denote the moduli space of all dilation surfaces on genus $g$ surfaces
with $n$ marked points whose complex cone angles are given by $\bfm$.

Though dilation surfaces do not appear in Veech's work \cite{veech93},
his results easily imply that the set $D(X, P, \bfm)$ of dilation
structures on $X$ with fixed cone points $P$ and orders $\bfm$ is an
affine set (or torsor) modeled on a lattice in $\Omega(X)$ (see
Corollary~\ref{C:DilationFiber}).  This strongly suggests that the
$\ModD(\bfm)$ are covering spaces of $\moduli$, which we also
establish in~\S\,3.

\begin{theorem}\label{thm:dilation_covering}
  Let $\bfm = (m_1, \ldots, m_n)$ be a tuple of complex numbers with
  integral real parts so that $\sum_{i=1}^n m_i = 2g-2$. Then
  $\ModD(\bfm)$ is a real analytic suborbifold of $\ModA(\bfm)$ whose forgetful map to  $\moduli$ is an orbifold covering map. 
\end{theorem}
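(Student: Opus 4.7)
The plan is to realize $\TeichD(\bfm)$ as a disjoint union of real-analytic sections of the holomorphic affine bundle $\TeichA(\bfm)\to\Teich$ furnished by Theorem~\ref{thm:affine_covering}, and then to descend to moduli by the $\Mod$-action.

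First I would analyze the fibers. By Corollary~\ref{C:DilationFiber}, combined with the hypothesis that $\Re\bfm$ is integral, each fiber $D(X,P,\bfm)\subset A(X,P,\bfm)$ is a non-empty torsor for a lattice in $\Omega(X)$. Concretely, Veech's exponential action shows that adding $\eta\in\Omega(X)$ to an affine structure multiplies the linear part of the holonomy by the character $\gamma\mapsto\exp(\int_\gamma\eta)$; the dilation condition that the linear holonomy land in $\bR_+$ therefore becomes $\mathrm{Im}\int_\gamma\eta\in 2\pi\bZ$ for every $\gamma\in H_1(X,\bZ)$. The hypothesis $\Re\bfm\in\bZ^n$ is precisely what forces the local monodromy around each cone point to already lie in $\bR_+$, so no further condition on loops around punctures arises. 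Since the imaginary-period map $\Omega(X)\to H^1(X,\bR)$, $\eta\mapsto\mathrm{Im}[\eta]$, is a real-linear isomorphism by standard Hodge theory, the lattice $\Lambda(X,P)=\{\eta:\mathrm{Im}[\eta]\in 2\pi H^1(X,\bZ)\}$ has full real rank $2g$ inside the $2g$-real-dimensional fiber.

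Next I would globalize. Using the global holomorphic section provided by Theorem~\ref{thm:affine_covering}, identify $\TeichA(\bfm)$ with the total space of the Hodge bundle over $\Teich$; since $\Teich$ is simply connected, a basis $\gamma_1,\dots,\gamma_{2g}$ of $H_1(\Sigma_g,\bZ)$ can be transported globally via the markings. In a local trivialization $(u,v)\in U\times\bC^g$ the dilation condition reads $\mathrm{Im}(\varphi_i(u)+\pi_i(v))\in 2\pi\bZ$ for $i=1,\dots,2g$, where each $\pi_i$ is a fiberwise period linear functional and each $\varphi_i$ is holomorphic in $u$. Because the map $v\mapsto(\mathrm{Im}\,\pi_i(v))_i$ is a real-linear isomorphism $\bC^g\to\bR^{2g}$, each choice of integer tuple $(k_1,\dots,k_{2g})$ determines a unique real-analytic section $v(u,k)$ of $U\times\bC^g$ over $U$. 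Thus $\TeichD(\bfm)\subset\TeichA(\bfm)$ is a real-analytic submanifold and $\TeichD(\bfm)\to\Teich$ is a real-analytic covering map with fiber the locally constant lattice $\bZ^{2g}$.

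Finally I would descend to moduli. The dilation condition is intrinsic to the $\mathrm{Aff}(\bC)$-atlas, hence $\Mod$-invariant, so $\TeichD(\bfm)$ is a $\Mod$-stable submanifold on which $\Mod$ acts properly discontinuously (proper discontinuity is inherited from $\Teich$ through the $\Mod$-equivariant covering). Consequently $\ModD(\bfm)=\TeichD(\bfm)/\Mod$ is a real-analytic suborbifold of $\ModA(\bfm)$, and the induced map to $\moduli$ is an orbifold covering: over an orbifold chart $V/\mathrm{Aut}(X,P)\subset\moduli$ the preimage decomposes into charts indexed by $\mathrm{Aut}(X,P)$-orbits on $\Lambda(X,P)$, each carrying as isotropy the stabilizer of a representative. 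I expect the main obstacle to be the fiberwise step: one must accurately track how Veech's exponential action moves the full holonomy representation and separate the contribution of cycles in $H_1(X,\bZ)$ from loops around punctures, so that the integrality hypothesis on $\Re\bfm$ emerges as both necessary and sufficient for the lattice-coset description. Once this is in hand, the covering structure at the Teichm\"uller level and the orbifold descent are routine.
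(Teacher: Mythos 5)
Your proposal is correct and follows essentially the same route as the paper: both use the global holomorphic section of $\TeichA(\bfm)\to\Teich$ from Theorem~\ref{thm:affine_covering}, a lift of its holonomy character, and the fact that the imaginary-period map $\Omega(X)\to H^1(X,\bR)$ is an isomorphism to exhibit $\TeichD(\bfm)$ as a disjoint union of real-analytic sections indexed by $H^1(\Sigma,2\pi\bZ)\cong\bZ^{2g}$, then descend by $\Mod$-equivariance. The paper packages this as a single real-analytic isomorphism $\Teich\times H^1(\Sigma,2\pi i\bZ)\to\TeichD(\bfm)$ rather than working in local trivializations, but the content is identical.
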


By taking a directional vector field, a dilation surface structure on a compact genus $g$ Riemann surface
$X$ with $n$ cone points $P$ defines a \emph{framing} of
$X \setminus P$, that is the homotopy class of a trivialization
$$f\colon T^1(X \setminus P) \to (X \setminus P) \times S^1$$
of the tangent
circle bundle of $X\setminus P$. Given a $C^1$-immersed closed curve
$\gamma$, applying a framing $f$ to the tangent vector field of
$\gamma$ defines a ``Gauss map'' $G\colon S^1\to S^1$ whose degree is
the \emph{turning number} of $\gamma$ relative to $f$.  If $\gamma_i$
is a positively-oriented loop around the $i$th point in $P$ and $r_i$
is its turning number, then the Poincare-Hopf theorem implies that
$\sum_{i=1}^n r_i = 2g+n-2$.  Given a tuple of integers
$\boldsymbol{r} = (r_1, \hdots, r_n)$ that sums to $2g+n-2$, let
$F(X, P, \boldsymbol{r})$ denote the set of framings on
$X \setminus P$ so that the turning number of $\gamma_i$ is $r_i$.
For framings arising from dilation structures, the turning number and
order at a cone point are related by $r_i = \Re(m_i)+1$. 

Given integers $\bfr = (r_i)$ which sum to $2g - 2 +n$, we denote by
$\ModF(\bfr)$ the moduli space of $n$-pointed Riemann surfaces
$(X, P)$ together with a framing for $X\setminus P$ having turning
numbers $r_i$ at each $p_i$, a covering space $\ModF(\bfr)\to \moduli$ whose
fiber over the point represented by $X$ is a $H_1(X, \zed)$-torsor.
The map assigning to a dilation surface its framing defines a map
$\ModD(\bfm) \to \ModF(\bfr)$, where $r_i = \Re(m_i)+1$. 

\begin{theorem}\label{thm:dilation_covering2}
  The map $\ModD(\bfm) \to \ModF(\bfr)$ sending a dilation surface to
  its framing is an isomorphism of orbifold covering spaces over $\moduli$. 
\end{theorem}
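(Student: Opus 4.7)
The plan is to reduce the assertion to a fiberwise bijection at the Teichmüller level. Both $\ModD(\bfm)$ and $\ModF(\bfr)$ are orbifold coverings of $\moduli$ by Theorem \ref{thm:dilation_covering}, and the framing construction is $\Mod$-equivariant. So it suffices to pull back along $\Teich \to \moduli$ and show the resulting map of ordinary covering spaces of $\Teich$ is a bijection on the fiber over a single point. Concretely, for each $(X,P) \in \Teich$, I would show that the map
\[
D(X, P, \bfm) \longrightarrow F(X, P, \bfr)
\]
sending a dilation structure to its framing is a bijection.

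First I would verify well-definedness: that the framing associated to a dilation structure with cone orders $\bfm$ really has turning number $r_i = \Re(m_i)+1$ around each $p_i$. This is a local calculation near each cone point, where the dilation 1-form takes the form $z^{m_i} g(z)\,dz$; the horizontal direction rotates by $-2\pi\Re(m_i)$ as $\arg(z)$ traverses a small positively oriented loop, and adding the $2\pi$ contribution from the rotation of the tangent vector itself yields $r_i = \Re(m_i)+1$.

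The main step is to identify both sides as torsors over $H_1(X,\zed)$ in a compatible way. The right-hand side $F(X, P, \bfr)$ is a torsor over the kernel of the boundary-turning map $H^1(X \setminus P, \zed) \to \bigoplus_{i=1}^n \zed$, which by the long exact sequence and Poincaré duality is $H^1(X,\zed) \cong H_1(X,\zed)$. By Corollary~\ref{C:DilationFiber}, the left-hand side is a torsor over the lattice
\[
\Lambda(X) = \Bigl\{\omega \in \Omega(X) : \textstyle\int_\gamma \omega \in 2\pi i \zed \text{ for all } \gamma \in H_1(X,\zed)\Bigr\},
\]
and $\omega \mapsto \tfrac{1}{2\pi}[\Im \int_\cdot \omega]$ identifies $\Lambda(X)$ with $H^1(X,\zed)$, using the Hodge-theoretic fact that $\omega \mapsto [\Im \int \omega]$ is an $\bR$-linear isomorphism $\Omega(X) \xrightarrow{\sim} H^1(X,\bR)$. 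I would then check equivariance: the exponential action of $\omega \in \Lambda(X)$ locally multiplies the dilation 1-form by $e^{\int \omega}$, so the horizontal vector field rotates by $\Im \int \omega$; integrating this rotation along $\gamma$ gives precisely $\tfrac{1}{2\pi} \Im \int_\gamma \omega$, which is the cohomology class by which the framing changes. Since the map between nonempty torsors is equivariant with respect to an isomorphism of structure groups, it is a bijection.

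The hard part is the bookkeeping of signs and normalizations throughout: tracking orientation conventions carefully so that the turning number formula $r_i = \Re(m_i)+1$ emerges with the correct sign, matching the framing change under the exponential action to the class $\tfrac{1}{2\pi}[\Im \int \omega]$ without stray sign or $2\pi$ twists, and ensuring the identification $\Lambda(X) \cong H^1(X,\zed)$ is the one consistent with the framing torsor structure rather than its inverse. Once these compatibilities are pinned down the torsor argument yields bijectivity, and $\Mod$-equivariance upgrades the fiberwise conclusion to the statement about orbifold covers of $\moduli$.
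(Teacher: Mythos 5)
Your proposal is correct and follows essentially the same route as the paper: both identify $D(X,P,\bfm)$ as an $H^1(X,\zed)$-torsor via Corollary~\ref{C:DilationFiber} (with $\Omega(X)\cong H^1(X,\bR)$ by $\alpha\mapsto[\Im\alpha/2\pi]$), identify $F(X,P,\bfr)$ as an $H^1(X,\zed)$-torsor via the exact sequence \eqref{eq:exact sequence}, and deduce bijectivity from the equivariance supplied by Theorem~\ref{T:TurningNumber}, then upgrade the fiberwise bijection to an isomorphism of covering spaces by $\Mod$-equivariance. The sign/normalization bookkeeping you flag is exactly what Theorem~\ref{T:TurningNumber} pins down.
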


Let $\Mod[g,n][f]< \Mod$ be the stabilizer of a framing $f$. 
Since $\moduli$ is an orbifold $K(\Mod, 1)$, we obtain:

\begin{cor}\label{C:FundamentalGroup}
  Each component $C$ of $\ModD(\bfm)$ is a
  $K(\mathrm{Mod}_{g,n}[f],1)$, in the orbifold sense,  where $f$ is the framing
  associated to an element of $C$.
\end{cor}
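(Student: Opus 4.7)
The plan is to combine Theorem~\ref{thm:dilation_covering2} with the standard correspondence between orbifold covering spaces of a $K(G,1)$-orbifold and subgroups of $G$, using the fact that $\Teich$ is contractible.

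First, by Theorem~\ref{thm:dilation_covering2}, there is an isomorphism $\ModD(\bfm) \cong \ModF(\bfr)$ of orbifold covering spaces of $\moduli$, where $r_i = \Re(m_i) + 1$. So it suffices to prove the statement for each connected component of $\ModF(\bfr)$, reducing the problem to a topological one about framed moduli spaces.

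Second, I would realize $\ModF(\bfr)$ as an explicit stack quotient. Let $\Phi(\bfr)$ denote the set of homotopy classes of framings of $\Sigma_g \setminus S$ with turning numbers $\bfr$. Then $\Mod[g,n]$ acts on $\Phi(\bfr)$ by pullback under a representative diffeomorphism, and unwinding the definition of the framed moduli space (sketched in the paragraph preceding Theorem~\ref{thm:dilation_covering2}) yields an identification of orbifolds over $\moduli$
\[
\ModF(\bfr) \cong \bigl[(\Teich \times \Phi(\bfr))/\Mod[g,n]\bigr],
\]
where $\Mod[g,n]$ acts diagonally. The connected components of $\ModF(\bfr)$ are therefore in bijection with $\Mod[g,n]$-orbits on $\Phi(\bfr)$.

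Third, given a component $C$ and a framing $f \in \Phi(\bfr)$ representing the corresponding orbit, the subgroup $\Mod[g,n][f]$ is by definition the stabilizer of $f$, so
\[
C \cong \bigl[\Teich/\Mod[g,n][f]\bigr].
\]
Since $\Teich$ is contractible, this exhibits $C$ as an orbifold $K(\Mod[g,n][f],1)$, as required. The main (modest) obstacle is verifying the stack-quotient description of $\ModF(\bfr)$ in the second step: one must check that the orbifold structure used in defining $\ModF(\bfr)$ as a cover of $\moduli$ coincides with the diagonal quotient above. This amounts to checking naturality of the framing construction under the mapping class group action and is built into the definition of $\ModF(\bfr)$.
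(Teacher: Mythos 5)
Your proposal is correct and follows essentially the same route as the paper: the paper deduces the corollary directly from the fact that $\moduli$ is an orbifold $K(\Mod,1)$ together with Theorem~\ref{thm:dilation_covering2}, which is exactly the covering-space/stabilizer argument you make explicit via the quotient description $C \cong \bigl[\Teich/\Mod[g,n][f]\bigr]$ with $\Teich$ contractible. Your version just spells out the standard correspondence between components of an orbifold cover of a $K(G,1)$ and stabilizer subgroups, which the paper leaves implicit.
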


Calderon and Salter \cite{CalderonSalter2020} showed that the fundamental group of non-hyperelliptic components of strata of translation surfaces in genus $g\geq 5$ surjects onto $\mathrm{Mod}_{g,n}[f]$. It would be interesting to determine whether this follows from Corollary \ref{C:FundamentalGroup}.

By Theorem \ref{thm:dilation_covering2}, determining the components of $\ModD(\bfm)$ is equivalent to determining the $\mathrm{Mod}_{g,n}$ orbits\footnote{The related question of determining $\mathrm{Mod}_{g,n}$ orbits of relative framings was resolved by Randal-Williams \cite{RandalWilliams}; but this will not be relevant in the sequel.} on $F(X, P, \boldsymbol{r})$, which were computed by Kawazumi \cite{kawazumi} and which we will use to show the following.

\begin{theorem}\label{T:Components}
Let $\bfm = (m_1, \ldots, m_n)$ be as in Theorem \ref{thm:dilation_covering} and $\boldsymbol{\kappa} = (\mathrm{Re}(m_1), \hdots, \mathrm{Re}(m_n))$. 
\begin{itemize}
    \item[(a)] If $g=0$, $\ModD(\bfm)$ has exactly one component.
    \item[(b)] If $g = 1$ and $n = 0$, $\ModD(\bfm)$ has infinitely many components.
    \item[(c)] If $g = 1$ and $n \ne 0$, $\ModD(\bfm)$ has exactly $\varphi(\mathrm{gcd}(\boldsymbol{\kappa}))$ components, where $\varphi$ is Euler's phi function.
    \item[(d)] If $g \geq 2$ and $\boldsymbol{\kappa}$ contains an odd number, then $\ModD(\bfm)$ has exactly one component.
    \item[(e)] If $g \geq 2$ and $\boldsymbol{\kappa}$ contains only even numbers, then $\ModD(\bfm)$ has two components distinguished by the Arf invariant of the framings associated to the dilation surfaces in $\ModD(\bfm)$.
\end{itemize}
\end{theorem}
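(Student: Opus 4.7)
By Theorem~\ref{thm:dilation_covering2}, $\ModD(\bfm) \cong \ModF(\bfr)$ as orbifold covers of $\moduli$, where $r_i = \Re(m_i) + 1$. Hence components of $\ModD(\bfm)$ are in bijection with $\Mod$-orbits on the set $F(X,P,\bfr)$ of framings of $X \setminus P$ with prescribed turning numbers, for any chosen representative $(X,P)$. The plan is to invoke Kawazumi's classification of these orbits \cite{kawazumi} and read off each case, using that $r_i$ odd $\Leftrightarrow$ $\kappa_i$ even.

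For $g \geq 2$, Kawazumi's theorem asserts that $\Mod$ acts transitively on $F(X,P,\bfr)$ unless all $r_i$ are odd, in which case there are exactly two orbits, distinguished by the Arf invariant of the quadratic refinement on $H_1(X;\bZ/2\bZ)$ determined by the framing. Translating the parity condition yields parts (d) and (e) immediately: some $\kappa_i$ odd gives a single orbit, while all $\kappa_i$ even gives two orbits classified by the Arf invariant of the associated framing.

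For $g = 0$, Kawazumi's classification (or, equivalently, an explicit argument in which any two framings with the same turning numbers are related by compositions of Dehn twists along simple closed curves in $\bC P^1 \setminus P$, whose effect on the framing can be computed directly) produces a unique orbit, giving (a). For $g = 1$ and $n = 0$, the set of framings of the closed torus is a torsor over $H^1(T^2;\bZ) \cong \bZ^2$ upon which $\Mod[1,0] = \mathrm{SL}(2,\bZ)$ acts in the standard way, with orbits indexed by the gcd of a pair of integers; hence infinitely many, giving (b). For $g = 1$ and $n \geq 1$, the framings with prescribed turning numbers again form a torsor over $H^1(T^2 \setminus P; \bZ)$; here Kawazumi's analysis, combined with the constraint $\sum \kappa_i = 0$, identifies the $\Mod[1,n]$-orbits with $(\bZ/d\bZ)^\times$ for $d = \gcd(\boldsymbol{\kappa})$, giving the $\varphi(d)$ orbits of (c).

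The higher-genus parts (d) and (e) follow essentially directly from Kawazumi's theorem once the turning numbers are matched with the $\kappa_i$. The main obstacle is the genus one analysis, which cannot be reduced to a single Arf-type invariant: one must make the framing torsor and the mapping class group action on it explicit, track how the vanishing sum constraint on $\boldsymbol{\kappa}$ intertwines with the available mapping classes, and verify that the resulting classifying invariant is exactly the class in $(\bZ/d\bZ)^\times$. Once this calculation is in hand, Corollary~\ref{C:FundamentalGroup} and the enumeration above combine to give the claimed component counts.
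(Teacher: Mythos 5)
Your proposal is correct and follows essentially the same route as the paper: reduce via Theorem~\ref{thm:dilation_covering2} to counting $\Mod$-orbits on $F(X,P,\bfr)$ and then invoke Kawazumi's classification, with the parity translation $r_i$ odd $\Leftrightarrow$ $\kappa_i$ even giving cases (d) and (e). The extra detail you supply in genus $0$ and $1$ (the torsor structure and the $(\bZ/d\bZ)^\times$ count) is exactly what the paper leaves implicit in its citation of Kawazumi's theorem, so no further comparison is needed.
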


Theorem \ref{T:Components} answers a question of Duryev, Fougeron, and Ghazouani \cite{DFG} on the number of connected components of $\mathcal{D}_{g,n}(\bfm)$.

We conclude this section with a discussion of the previous literature. The study of complex affine surfaces with branch points dates back to Gunning \cite{gunning78}. In \cite{veech93}, Veech provides a comprehensive treatment of the moduli spaces of complex affine surfaces $\ModA$. There he gives local coordinates on $\ModA$ using the representation variety $\Hom(\pi_1(\Sigma_{g,n},*), \mathrm{Aff}(\bC))$ along with the relative locations of the cone points and also shows that $\TeichA(\bfm)$ is a complex analytic submanifold of
$\Teich \times \Hom(\pi_1(\Sigma_{g,n}, *), \bC^*)$, along with many other results. 
In Sections \ref{S:affine} and \ref{S:Moduli}, we provide an alternate and
self-contained treatment of some of Veech's work on complex affine
surfaces and use these results as a foundation upon which to
understand the topology of the moduli space of dilation
surfaces, which Veech did not specifically study in his paper.

Theorem \ref{T:Components} is reminiscent of the classification of
components of strata of translation surfaces by Kontsevich and Zorich
\cite{kz03}, who showed that each stratum has up to three components:
a hyperelliptic component in the strata $\mathcal{H}(2g-2)$ and
$\mathcal{H}(g-1,g-1)$, two other connected components differentiated
by parity of spin structure when the orders of the zeros are all even,
and one other connected component otherwise. Boissy \cite{boissy}
obtained a similar description of components of strata of meromorphic
differentials. It would be interesting to determine whether Theorem
\ref{T:Components} could be used to deduce these results.  

We also note that various authors have studied dynamical questions about dilation surfaces. For more on this, see \cite{BFG}, \cite{BGT} \cite{BoGh}, \cite{BS}, \cite{DFG}, \cite{ghazouani}, \cite{tahar}, and \cite{Wang}.

\section{Affine and dilation surfaces}
\label{S:affine}

In this section, we define a \emph{complex affine surface} roughly as
the data of a closed Riemann surface $X$ together with a set of marked
points $P = \{p_1, \ldots, p_n\}$, a flat line bundle
$L\to X \setminus P$, and a holomorphic section $\omega$ of
$L\otimes \Omega_X$ over $X\setminus P$.  We define the order of
vanishing $m_i\in \cx$ of such a section at $p_i$ in
\S,\ref{sub:affine}.

Given a surface $X$, which we will always assume to be closed, and a
set of marked points $P\subset X$, we denote by $A(X, P)$
the set of affine structures on $X$ with cone points at $P$, and by
$A(X, P, \bfm)$ the set of affine structures with order
$m_i$ at $p_i$. We define in
\S\,\ref{sub:affinecone} the \emph{exponential action}, as introduced by Veech, of $\Omega(X)$
on $A(X, P ,\bfm)$ and of $\Omega(X, P)$ (the space of
holomorphic one-forms with at worst simple poles along $P$) on
$A(X, P)$.  The action of $\Omega(X, P)$ is analogous and modifies the
cone angles by adding the residues.  In \S\,\ref{sub:affinecone}, we
prove the following theorem of Veech which shows
that given marked points $P$ and cone angles $\bfm$ summing to $2g-2$,
there is an affine structure with these data which is unique up to the
exponential action.

\begin{theorem}[{\cite[Theorem~1.13]{veech93}}]
  \label{thm:veech}
  The set of affine structures $A(X, P)$ is nonempty, and
  $A(X, P, \bfm)$ is nonempty if and only if
  $\sum m_i = 2g-2$.  The exponential actions of $\Omega(X, P)$ and
  $\Omega(X)$ are free and transitive.
 \end{theorem}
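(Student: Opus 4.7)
The plan is to encode an affine structure in $A(X,P,\bfm)$ as a pair $(L,\omega)$, where $L$ is a flat holomorphic line bundle on $X\setminus P$ and $\omega$ is a nowhere-zero holomorphic section of $L\otimes\Omega_X$ on $X\setminus P$ extending meromorphically across each $p_i$ with order $m_i$. Given $\eta\in\Omega(X,P)$, let $L_\eta$ be the flat line bundle on $X\setminus P$ with monodromy $\gamma\mapsto \exp\bigl(\oint_\gamma \eta\bigr)$, and let $E(\eta)$ be a (multivalued) primitive, so that $e^{E(\eta)}$ is a canonical nowhere-zero holomorphic section of $L_\eta$. I will define the exponential action by
\[
\eta\cdot(L,\omega) \;=\; \bigl(L\otimes L_\eta,\; e^{E(\eta)}\omega\bigr).
\]
Locally near $p_i$, writing $\eta=(r_i/z+\text{hol.})\,dz$ with residue $r_i$, we have $e^{E(\eta)}=z^{r_i}\cdot(\text{hol., nonzero})$, so $e^{E(\eta)}\omega$ has order $m_i+r_i$ at $p_i$. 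Thus $\Omega(X)$ preserves the order data $\bfm$, while $\Omega(X,P)$ shifts it by the residue vector.

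For transitivity, suppose $(L_1,\omega_1),(L_2,\omega_2)\in A(X,P,\bfm)$. The ratio $\omega_2/\omega_1$ is a nowhere-zero holomorphic section of the flat bundle $L_2\otimes L_1^{-1}$ on all of $X$, because the $m_i$-th order vanishing/pole cancels at each $p_i$. Its logarithmic derivative $\eta:=d\log(\omega_2/\omega_1)$ is therefore a holomorphic one-form in $\Omega(X)$, and unwinding definitions gives $\eta\cdot(L_1,\omega_1)\cong(L_2,\omega_2)$. The identical argument applied in $A(X,P)$ produces an $\eta\in\Omega(X,P)$ with simple poles of residue $m_{2,i}-m_{1,i}$ at $p_i$. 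Freeness is then immediate: if $e^{E(\eta)}\omega\cong\omega$ as flat bundle sections, then $e^{E(\eta)}$ is a nonzero constant on each component, forcing $\eta=0$.

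For existence when $\sum m_i=2g-2$, I will fix any nonzero meromorphic one-form $\omega_0$ on $X$, with divisor of degree $2g-2$ supported on some finite set $Q$. Setting $P'=P\cup Q$, I prescribe residues $r_{p_i}=m_i-\ord_{p_i}\omega_0$ for $p_i\in P$ and $r_q=-\ord_q\omega_0$ for $q\in Q\setminus P$, which sum to $\sum m_i - \deg K_X=0$. By the Mittag--Leffler theorem for meromorphic one-forms on a Riemann surface, there exists $\eta\in\Omega(X,P')$ realizing these simple-pole residues, and then $e^{E(\eta)}\omega_0$ is an affine structure with the prescribed orders $\bfm$ on $P$ and no cone points on $Q\setminus P$. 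Conversely, given any $(L,\omega)\in A(X,P,\bfm)$, the meromorphic one-form $d\log(\omega/\omega_0)$ has precisely the residues above, so the residue theorem forces $\sum m_i = 2g-2$. The main subtle point, in my view, is not any single computation but the initial bookkeeping: verifying that $e^{E(\eta)}\omega$ descends to a well-defined affine structure on $X\setminus P$, so that the monodromy of the multivalued primitive $E(\eta)$ is exactly absorbed by the twist $L\otimes L_\eta$ and the construction is independent of the choice of local primitive. Once this compatibility is in place, transitivity, freeness, existence, and the obstruction all reduce cleanly to the log-derivative trick, Mittag--Leffler, and the residue theorem.
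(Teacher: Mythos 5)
Your argument is correct, but it reaches the two substantive conclusions --- existence and the obstruction $\sum m_i = 2g-2$ --- by a genuinely different mechanism than the paper. For transitivity and freeness the two proofs essentially coincide: both form the ratio $\omega_2/\omega_1$, observe that the orders cancel so that it is a nowhere-zero holomorphic section of a flat line bundle over all of $X$, and conclude the holonomy is exponential; the paper quotes Proposition~\ref{prop:kernel} (a diagram chase through the exponential sheaf sequence in Figure~\ref{F:CD}), while you extract $\alpha$ directly as the logarithmic derivative $d\log(\omega_2/\omega_1)$, which is a more hands-on proof of the same kernel statement. The divergence is elsewhere. The paper gets the obstruction from the Deligne--Mostow degree formula $\deg L_{\chi,\bfm}=-\sum m_i$ (Proposition~\ref{P:DM}) and gets existence from the surjectivity of $H^1(X,\cx^*)\to\Pic^0(X)$, correcting an arbitrary character with the right local monodromies by some $\rho$ so that $L_{\rho\chi,\bfm}\otimes\Omega_X$ becomes trivial. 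You instead work against a reference meromorphic one-form $\omega_0$: the obstruction falls out of the residue theorem applied to $d\log(\omega/\omega_0)$, and existence comes from exponentiating a differential of the third kind with prescribed residues, which exists precisely because your residues sum to zero. Your route is more constructive and stays within classical Riemann surface theory, at the cost of introducing the auxiliary divisor $Q$ of $\omega_0$; you should make explicit that the monodromy of $L_\eta$ around each $q\in Q\setminus P$ is $e^{-2\pi i\,\ord_q\omega_0}=1$ (an integer residue), so the resulting structure genuinely extends over $Q\setminus P$ with no cone point there, and likewise carry out the routine equivariance check that $e^{E(\eta)}$ descends to a section of $L_\eta$ (the paper does this in Remark~\ref{R:Section}). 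The paper's cohomological route is shorter given Figure~\ref{F:CD} and, more importantly, is the form of the argument that globalizes over Teichm\"uller space in Section~\ref{S:Moduli}, where the same map $H^1(X,\cx^*)\to\Pic^0(X)$ reappears as the derivative of the section $\sigma$ of the relative Jacobian.
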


In other words, $A(X, P, \bfm)$ and  $A(X, P)$ are
affine spaces modeled on $\Omega(X)$ and $\Omega(X, P)$ respectively.

We moreover define a dilation surface as an affine surface whose flat
bundle $L$ has positive real holonomy.  As a corollary of
Theorem~\ref{thm:veech}, we show that the data of $X$, $P$, and orders
of vanishing define a dilation structure uniquely up to the action of
a lattice $\Lambda\subset \Omega(X)$.

\subsection{Flat bundles}
\label{sub:flat}

We start by recalling some background on flat bundles. Fix a basepoint $*$ of $X$. Given $\chi \in \Hom(\pi_1(X, *), \mathbb{C}^*) = H^1(X, \mathbb{C}^*)$,  define a flat holomorphic line bundle \begin{equation*}
  \flat[\chi, *] = (\widetilde{X} \times \cx) / \pi_1(X, *),
\end{equation*}
where $\pi_1(X, *)$ acts on the universal cover $\widetilde{X}$ of $X$
as the deck transformation group and on the second factor by the
character $\chi$.  We will usually omit the base-point from the
notation, as two flat bundles with the same holonomy character are
isomorphic. This defines a homomorphism $H^1(X, \cx^*)\to \Pic^0(X)$,
where $\Pic^0(X)$ is the group of degree zero holomorphic line bundles
on $X$, which can be seen via the commutative diagram in Figure
\ref{F:CD}.
\begin{figure}[ht]
    \begin{tikzcd}
    &  & 0\ar[d]  & & \\
    & & \Omega(X) \ar[d] & & \\
    0 \ar[r] &  H^1(X, \zed) \ar[r]\ar[d] & H^1(X, \cx) \ar[r]\ar[d] & H^1(X, \cx^*) \ar[r]\ar[d] & 0 \\
    0 \ar[r] & H^1(X, \zed) \ar[r] & H^1(X, \mathcal{O}_X) \ar[r]\ar[d] & \Pic^0(X) \ar[r] & 0 \\
    & & 0 & &.
\end{tikzcd}
\caption{}\label{F:CD}
\end{figure}
The bottom row comes from the exponential sheaf sequence on $X$. The vertical row is part
of the long exact sequence associated to the short exact sequence of sheaves,
\begin{equation*}
    \begin{tikzcd}
    0 \ar[r] &\mathbb{C}_X \ar[r] &\mathcal{O}_X \ar[r, "d"] &\Omega_X \ar[r] &0. \end{tikzcd}
\end{equation*}
Given $\alpha \in \Omega(X)$, define the
$\chi_\alpha \in H^1(X, \cx^*)$ by
$\chi_\alpha(\gamma) = e^{\int_\gamma \alpha}$.  The following classical
result, which appears in Gunning \cite[\S~8]{gunning_lectures},
follows by a straightforward diagram chase.

\begin{prop}
	\label{prop:kernel}
The map $H^1(X, \cx^*) \to \Pic^0(X)$ is a surjection whose kernel is $\{\chi_\alpha\}_{\alpha \in \Omega(X)}$.
\end{prop}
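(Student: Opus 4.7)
The plan is a straightforward diagram chase on Figure~\ref{F:CD}, once one knows that each of the two rows and the middle column is a short exact sequence. The bottom row is the standard exponential sheaf sequence. The top row is the cohomology long exact sequence of the constant-sheaf exponential sequence $0 \to \bZ \to \bC \xrightarrow{\exp} \bC^* \to 0$, truncated at $H^1(X, \bC^*)$; this truncation is justified because the next connecting map lands in the kernel of $H^2(X, \bZ) \to H^2(X, \bC)$, which vanishes since $H^2(X, \bZ) \cong \bZ$ is torsion-free. The middle column arises from $0 \to \bC_X \to \mathcal{O}_X \xrightarrow{d} \Omega_X \to 0$; the $H^0$ terms cancel as $X$ is compact, and Hodge theory ensures that the map $H^1(X, \bC) \to H^1(X, \mathcal{O}_X)$ is surjective, yielding the short exact sequence $0 \to \Omega(X) \to H^1(X, \bC) \to H^1(X, \mathcal{O}_X) \to 0$.

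For surjectivity of $H^1(X, \bC^*) \to \Pic^0(X)$, I would take $L \in \Pic^0(X)$, lift it to some $c \in H^1(X, \mathcal{O}_X)$ via the bottom row, and then lift $c$ to some $b \in H^1(X, \bC)$ via the middle column; commutativity then guarantees that the image of $b$ in $H^1(X, \bC^*)$ is a preimage of $L$. For the kernel, the inclusion $\{\chi_\alpha\}_{\alpha \in \Omega(X)} \subseteq \ker$ is immediate from the definitions: $\chi_\alpha$ is by construction the image of $\alpha \in \Omega(X) \subset H^1(X, \bC)$ in $H^1(X, \bC^*)$, and by commutativity its further image in $\Pic^0(X)$ equals the image of $\alpha$ under $\Omega(X) \to H^1(X, \mathcal{O}_X) \to \Pic^0(X)$, which vanishes by exactness of the middle column. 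For the reverse inclusion, given $\chi$ in the kernel, lift it to some $b \in H^1(X, \bC)$, project to $c \in H^1(X, \mathcal{O}_X)$, use exactness of the bottom row to write $c$ as the image of some $n \in H^1(X, \bZ)$, and observe that $b$ minus the image of $n$ dies in $H^1(X, \mathcal{O}_X)$, hence lifts to some $\alpha \in \Omega(X)$; since the top row is exact, the image of $n$ in $H^1(X, \bC^*)$ is zero, so $\chi = \chi_\alpha$.

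The only real subtlety is confirming that the composition $\Omega(X) \hookrightarrow H^1(X, \bC) \to H^1(X, \bC^*)$ sends $\alpha$ to the character $\gamma \mapsto e^{\int_\gamma \alpha}$; this is the standard description of the connecting map in the constant-sheaf exponential sequence under the de Rham isomorphism, and while it should be recorded explicitly so that the definition of $\chi_\alpha$ matches the diagram, it introduces no obstacle beyond unpacking definitions.
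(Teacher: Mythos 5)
Your proof is correct and is exactly the argument the paper intends: the paper simply asserts that the proposition ``follows by a straightforward diagram chase'' on the commutative diagram of Figure~\ref{F:CD}, and your proposal carries out that chase in full, including the two points the paper leaves implicit (why the top row and middle column are short exact, and why the map $\Omega(X)\to H^1(X,\cx^*)$ agrees with $\alpha\mapsto\chi_\alpha$). No discrepancies.
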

\begin{rem}\label{R:Section}
  Let $\pi\colon \widetilde{X} \rightarrow X$ be the universal cover
  and $\alpha \in \Omega(X)$. Let $p$ be any preimage of $*$ on
  $\widetilde{X}$. The map
  $\tilde{s}_\alpha(x) = e^{\int_p^x \pi^*\alpha}$ is
  $\pi_1(X, *)$-equivariant and hence descends to a nowhere vanishing
  holomorphic section $s_\alpha$ of $L_{\chi_\alpha}$, which shows
  that $L_{\chi_\alpha}$ is holomorphically trivial.
\end{rem}

\subsection{Affine surfaces} 
\label{sub:affine}

We will now define flat bundles over punctured surfaces and then define a complex affine surface as a meromorphic section of such a bundle. 

\begin{definition}
  Suppose that $L$ is a flat bundle over $\Delta^*$, the unit disk in
  $\mathbb{C}$ punctured at $0$, so that the monodromy of a positively
  oriented loop around $0$ is $a \in \mathbb{C}^*$. Let $m$ be a
  complex number so that $e^{2\pi i m} = a$. Any section of $L$ can be
  written as $s(z) = z^m f(z)$ where
  $f\colon \Delta^* \to \mathbb{C}$ is holomorphic. We say that
  $s$ is \emph{meromorphic} if $f$ is, in which case its \emph{order
    at $0$} is $\ord_0(f) + m$ (which does not depend on
  $m$).

  Similarly, we say a section $\omega$ of $L\otimes\Omega_{\Delta^*}$ is
  \emph{meromorphic} if $\omega/dz$ is a meromorphic section of $L$, and
  define its \emph{order} as $\ord_0 \omega = \ord_0 \omega/dz$.
\end{definition}

Since these definitions are coordinate-independent we may speak of
meromorphic sections of a flat bundle over a punctured surface as well
as the order of vanishing at a puncture.

\begin{definition}
\label{D:ComplexAffineSurface}
A \textit{branched complex affine surface} $(X, P, \chi, \omega)$ consists of a
Riemann surface $X$, a tuple of points
$P = (p_1, \ldots, p_n) \subset X$, a character
$\chi \in H^1(X \setminus P,\cx^*)$, and a meromorphic section
$\omega$ of $L_\chi \otimes \Omega_X$, with no poles or zeros on
$X \setminus P$.  We regard two sections of $L_\chi \otimes \Omega_X$
which differ by a constant multiple as defining the same affine surface.

If $\chi \in H^1(X \setminus P,\mathbb{R}_+)$ (resp.\ $\chi \in H^1(X \setminus P, S^1)$), then the complex affine surface is called a \emph{dilation} (resp.\ \emph{flat}) surface.
\end{definition}

We remark that the classical definition of a complex affine surface is
a $(G,X)$-structure on $X$ where $G$ is the group of complex affine
maps $\{z \mapsto az + b \colon z \in \bC^*, b \in \bC\}$ (see
\cite{gunning78}). The above definition gives a $(G,X)$-structure on
$X \setminus P$ but allows for cone points at $P$.  As we are
primarily interested in closed surfaces of genus greater than $1$, we
will continue to abuse terminology and refer to branched complex
affine surfaces simply as affine surfaces.

The points in $P$ will be called \emph{cone points}. If $\omega$
denotes the meromorphic section of $L_\chi \otimes \Omega_X$, the
\emph{complex cone angle at $p_j$} is
$2\pi\left(1 + \mathrm{ord}_{p_j}(\omega) \right)$. The real part of
the cone angle dictates the angle around the cone point, and the exponentiation of the product of $-2\pi$ and the imaginary part dictates the dilation of the metric around the cone point 
(see Figure \ref{F:dilation-tori}). In particular, the orders $\ord_{p_j} (\omega)$ of cone points of a dilation surface
must have integral real part and those of a flat surface must be
purely real.  This definition allows for $\omega$ to have zeros or
poles at the cone points. 

\begin{figure}[htbp]
\centering
\begin{subfigure}{.45\textwidth}
  \centering
  \includegraphics[width=.95\linewidth]{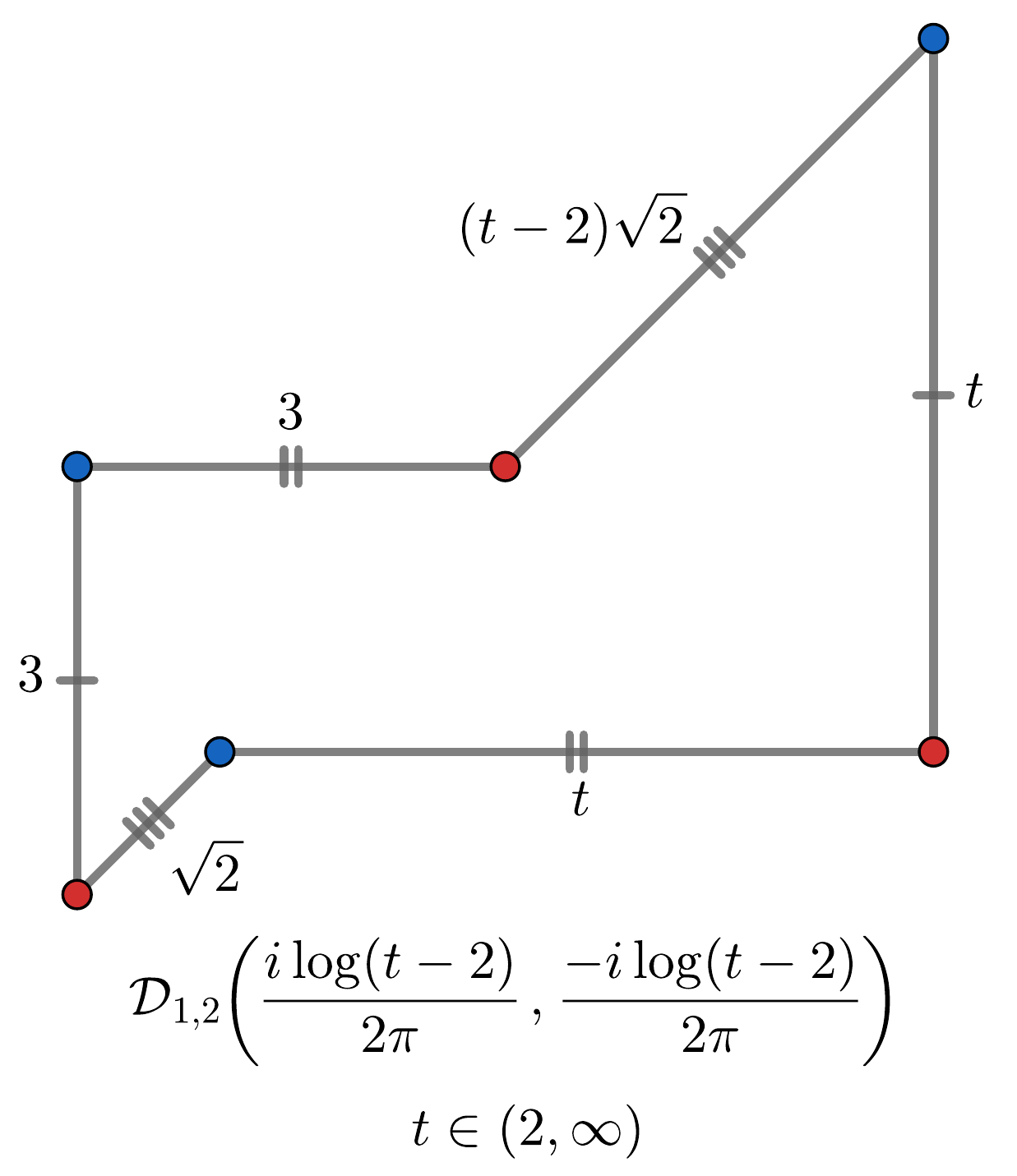}
  \caption{A one parameter family of dilation surfaces no two of which belong to the same stratum. The labels indicate lengths.} 
  \label{F:dilation-tori:sub1}
\end{subfigure}
\hspace{.5cm}
\begin{subfigure}{.45\textwidth}
  \centering
  \includegraphics[width=.95\linewidth]{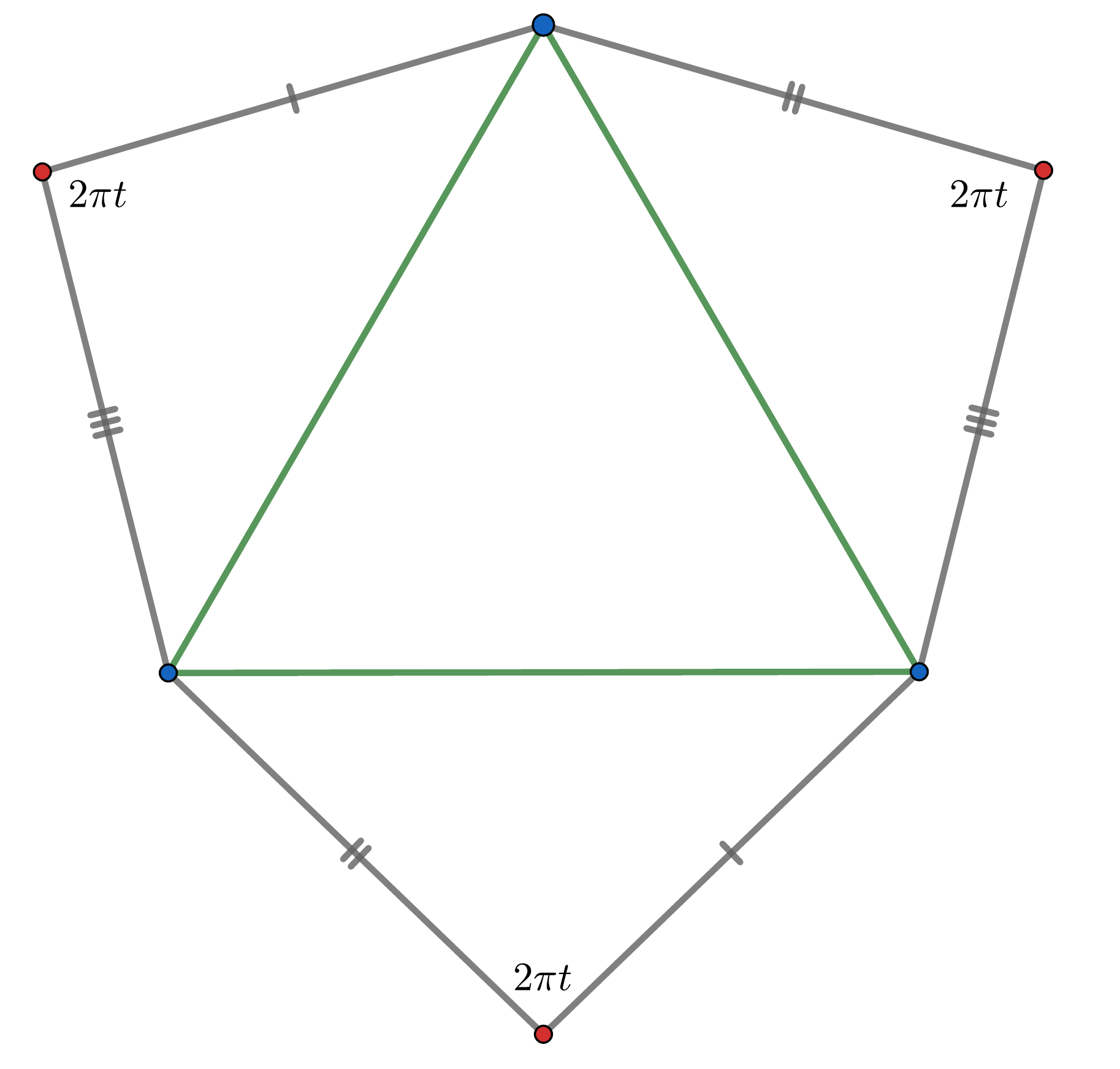}
  \caption{A one parameter family of flat surfaces formed by gluing congruent isosceles triangles to an equilateral triangle. For $t \in (0, 1]$ these surfaces belong to $\mathcal{A}_{1,2}(3t-1, 1-3t)$. }
  \label{F:dilation-tori:sub2}
\end{subfigure}
\caption{Two one-parameter families, one of dilation and the other of flat surfaces, no two of which belong to the same stratum.}
\label{F:dilation-tori}
\end{figure}

\subsection{Flat singular bundles}

We now wish to give an alternative definition of a complex affine
surface in terms of bundles over $X$ rather than the punctured surface
$X \setminus P$.  This will be the most useful point of view for
constructing the moduli space of affine surfaces in
\S\,\ref{S:Moduli}.  We first recall the notion of a flat bundle with
isolated singularities, extending $L_\chi$ over the punctures.  This
extension is not unique and essentially involves a choice of a
logarithm of the holonomy around each puncture.  

Given a finite-type surface $X\setminus P$, we will say that the
\emph{holonomy data} refers to a character
$\chi\in H^1(X\setminus P, \cx^*)$ together with the choice of a
collection $\bfm = (m_1, \ldots, m_n)$ of complex numbers so that
$e^{2 \pi i m_j} = \chi(\gamma_j)$ where $\gamma_j$ is a positively
oriented loop around the puncture $p_j$. Note that since
$\sum \gamma_j=0$ in $H_1(X\setminus P)$, we have $\sum m_j \in \mathbb{Z}$.

An affine surface $(X, P, \chi, \omega)$ as in Definition
\ref{D:ComplexAffineSurface} determines such holonomy data, where for
each puncture $m_i = \ord_{p_i} \omega$.

Following Deligne-Mostow \cite[\S~2]{delignemostow}, we note that
holonomy data defines an extension $L_{\chi, \bfm}$ of $L_\chi$ over
the punctures of $X \setminus P$.  

\begin{definition}
  If $\iota\colon X \setminus P \rightarrow X$ is the inclusion map,
  then define $L_{\chi, \bfm}$ to be the line bundle corresponding to
  the locally free subsheaf of $\iota_* L_\chi$ whose sections are
  meromorphic sections $\sigma$ of $L_{\chi}$ so that
  $\mathrm{ord}_{p_j}(\sigma)\geq m_j$ for all $j$.
\end{definition}

Informally, this definition amounts to declaring that $z^{m_i}$ is a
nonzero local holomorphic section of $L_{\chi, \bfm}$ in a neighborhood of $p_i$.

\begin{definition}\label{D:HolonomyDataModuli}
  Let $V_{X, P} \subseteq H^1(X \setminus P, \cx^*)\times \cx^n$
  denote the collection of pairs $(\chi, \bfm)$ so that
  $e^{2\pi i m_j} = \chi(\gamma_j)$ for all $j$.  Similarly, let
  $V_{X, P}(\bfm) = V_{X, P} \cap (H^1(X \setminus P, \cx^*) \times
  \{\bfm\})$.  The holonomy data of
  a complex affine surface is an element of $V_{X,P}$.
\end{definition}

\begin{prop}[{\cite[Proposition~2.11.1]{delignemostow}}]\label{P:DM}
  $\mathrm{deg}\left( L_{\chi, \bfm} \right) = -\sum m_i$.
\end{prop}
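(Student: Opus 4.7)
The plan is to choose any nonzero meromorphic section $s$ of $L_{\chi,\bfm}$, express the degree as the sum of its orders of vanishing on $X$, and evaluate that sum by a local comparison at the cone points together with a residue-theorem calculation. Such a section exists because every holomorphic line bundle on the compact Riemann surface $X$ admits a nonzero meromorphic section.

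First I would establish the local comparison of orders at a cone point. By construction, $L_{\chi,\bfm}$ is locally generated near $p_j$ by the (multi-valued) expression $z_j^{m_j}$ taken in some local flat branch of $L_\chi$, so $s = z_j^{m_j}h(z_j)$ locally for a meromorphic $h$, and
\[
 \ord_{p_j}^{L_{\chi,\bfm}}(s) \;=\; \ord_{p_j}(h) \;=\; \ord_{p_j}^{L_\chi}(s) - m_j,
\]
where the notation on the right is the complex-valued order of sections of $L_\chi$ defined in the excerpt. Off $P$ the two line bundles agree, so summing orders over all of $X$ gives
\[
 \deg(L_{\chi,\bfm}) \;=\; \sum_{p\in X}\ord_p^{L_{\chi,\bfm}}(s) \;=\; \sum_{p\in X}\ord_p^{L_\chi}(s) \;-\; \sum_{j=1}^n m_j.
\]

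The main step is then to show that $\sum_{p\in X}\ord^{L_\chi}_p(s) = 0$, and for this I would consider the logarithmic derivative $\eta = ds/s$. Since $L_\chi$ is flat, changing the local flat trivialization multiplies $s$ by a nonzero constant, which does not affect $\eta$; thus $\eta$ is a well-defined single-valued meromorphic $1$-form on $X\setminus P$ whose residues at the zeros and poles of $s$ are the usual integer orders there. Near $p_j$, using the local expression $s = z_j^{m_j}h(z_j)$, the identity $\eta = m_j\,dz_j/z_j + dh/h$ shows that $\eta$ extends meromorphically across $p_j$ with a simple pole of residue $m_j + \ord_{p_j}(h) = \ord^{L_\chi}_{p_j}(s)$. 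Applying the residue theorem to $\eta$ on the closed surface $X$ yields $\sum_{p\in X}\ord^{L_\chi}_p(s) = 0$, and combining with the display above gives the claimed formula $\deg(L_{\chi,\bfm}) = -\sum_{i=1}^n m_i$.

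The main obstacle is the careful local bookkeeping at the cone points: one must verify that $\eta = ds/s$ really does extend to a \emph{single-valued} meromorphic $1$-form on all of $X$, and that its residue at each $p_j$ is the complex order $\ord^{L_\chi}_{p_j}(s)$ rather than the integer order $\ord_{p_j}(h)$. Once this is set up in local flat branches---using that the multi-valued factor $z_j^{m_j}$ contributes $m_j\,dz_j/z_j$ and hence residue $m_j$---the residue theorem and the order comparison above combine immediately to yield the proposition.
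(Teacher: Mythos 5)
Your proof is correct, but it takes a genuinely different route from the paper's. The paper argues by deformation: the degree of $L_{\chi, \bfm}$ is a locally constant function of the holonomy data $(\chi,\bfm) \in V_{X,P}$, the connected components of $V_{X,P}$ are exactly the level sets of $\sum m_j$, and each component contains a classical point ($\chi = 1$, $\bfm$ integral) where $L_{\chi,\bfm} = \mathcal{O}(-\sum m_i p_i)$ visibly has degree $-\sum m_i$. You instead compute directly: you choose a meromorphic section $s$, compare its integer order as a section of $L_{\chi,\bfm}$ with its complex order as a section of $L_\chi$ at each $p_j$ (they differ by $m_j$), and then show that the complex orders sum to zero by applying the residue theorem to the logarithmic derivative $ds/s$ --- which is a single-valued meromorphic $1$-form on all of $X$ precisely because the flat transition constants of $L_\chi$ drop out of it and the multivalued factor $z_j^{m_j}$ contributes exactly $m_j\,dz_j/z_j$, hence residue $m_j$. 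Your local bookkeeping at the cone points is right: $h = s/z_j^{m_j}$ is single-valued and meromorphic on the punctured disk since $z_j^{m_j}$ carries the same monodromy as the flat trivialization, so $\eta = m_j\,dz_j/z_j + dh/h$ extends with a simple pole of residue $m_j + \ord_{p_j}(h)$. Your approach is self-contained and makes the appearance of $-\sum m_i$ transparent as a residue count, essentially reproving the Deligne--Mostow computation from scratch; the paper's is shorter but leans on continuity of degree in families and on the identification of the components of $V_{X,P}$. Both arguments are complete.
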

\begin{proof}
  Each element of $V_{X, P}$ defines a line bundle $L_{\chi,
    \bfm}$. Moreover, $(\chi, \bfm)$ and $(\chi', \bfm')$ are in the
  same component if and only if $\sum m_j = \sum m_j'$.

  For any family of line bundles, the degree is a continuous
  function. Since each component of $V_{X, P}$ contains a pair with
  $\chi=1$ and $\bfm$ integral, whose corresponding line bundle is the
  classical bundle $\mathcal{O}(-\sum m_i p_i)$, whose degree is
  $-\sum m_i$, the result follows.
\end{proof}

We are now prepared to give a second equivalent definition of a
complex affine surface.

\begin{definition}\label{D:CAS-scaling}
A \emph{complex affine surface} is a tuple $(X, P, \chi, \bfm)$ consisting of a Riemann surface $X$, a finite set of points $P \subset X$, and holonomy data $(\chi, \bfm)$ so that $L_{\chi, \bfm} \otimes \Omega_X$ is holomorphically trivial.
\end{definition}

\begin{prop}\label{L:Equivalent}
The two definitions $(X,P,\chi,\omega)$ from Definition \ref{D:ComplexAffineSurface} and $(X,P,\chi,\bfm)$ from Definition \ref{D:CAS-scaling} are equivalent. 
\end{prop}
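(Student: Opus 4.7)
The plan is to exhibit a natural bijection between the two descriptions by using the local structure of $L_{\chi,\bfm}$ near each puncture, namely the fact that $L_{\chi,\bfm}$ is trivialized locally by the multi-valued section $z^{m_i}$ in a coordinate $z$ centered at $p_i$.

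First I would go from Definition~\ref{D:ComplexAffineSurface} to Definition~\ref{D:CAS-scaling}. Given $(X,P,\chi,\omega)$, define $m_i = \ord_{p_i}(\omega)$. Because $\omega$ is a meromorphic section of $L_\chi\otimes\Omega_X$, near $p_i$ we can write $\omega/dz = z^{m_i} h(z)$ with $h$ holomorphic and $h(0)\neq 0$, and the monodromy calculation $e^{2\pi i m_i}=\chi(\gamma_i)$ holds tautologically, so $(\chi,\bfm)\in V_{X,P}$. By construction $\ord_{p_i}(\omega)\geq m_i$, so $\omega$ defines a global holomorphic section of $L_{\chi,\bfm}\otimes\Omega_X$; moreover, $\omega$ has no zeros on $X\setminus P$ (by the hypothesis in Definition~\ref{D:ComplexAffineSurface}) and is nonvanishing in the local trivialization $z^{m_i}\,dz$ of $L_{\chi,\bfm}\otimes\Omega_X$ near each $p_i$ since $h(0)\neq 0$. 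Hence $\omega$ trivializes $L_{\chi,\bfm}\otimes\Omega_X$.

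Conversely, suppose $(X,P,\chi,\bfm)$ satisfies the condition of Definition~\ref{D:CAS-scaling}. A holomorphic trivialization of $L_{\chi,\bfm}\otimes\Omega_X$ is a nowhere vanishing global section $\omega$, determined up to a nonzero scalar. Restricting to $X\setminus P$, $\omega$ is a meromorphic section of $L_\chi\otimes\Omega_X$. In the local trivialization of $L_{\chi,\bfm}$ near $p_i$, nonvanishing of $\omega$ at $p_i$ forces $\ord_{p_i}(\omega)=m_i$ exactly, and away from $P$ the section $\omega$ is holomorphic and nonzero because the trivializations of $L_{\chi,\bfm}$ and $L_\chi$ agree on $X\setminus P$. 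Thus $(X,P,\chi,\omega)$ satisfies Definition~\ref{D:ComplexAffineSurface}, and the scalar indeterminacy of $\omega$ matches the equivalence relation identifying affine surfaces that differ by a constant multiple.

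Finally, I would check that these two constructions are mutually inverse, which amounts to observing that the orders $\bfm$ one reads off from $\omega$ in the first direction are exactly the orders recorded in the second. The only substantive point, and the only step that requires any care, is the local matching at the punctures: one must verify that ``$\ord_{p_i}(\omega)\geq m_i$ together with nonvanishing of $\omega$ as a section of $L_{\chi,\bfm}\otimes\Omega_X$'' is equivalent to ``$\ord_{p_i}(\omega)=m_i$ in the sense defined for meromorphic sections of $L_\chi$''. This is immediate from the fact, implicit in the construction of $L_{\chi,\bfm}$ after Definition~\ref{D:HolonomyDataModuli}, that $z^{m_i}$ is a nowhere zero local section of $L_{\chi,\bfm}$ near $p_i$; I anticipate this bookkeeping to be the only nontrivial part of the argument.
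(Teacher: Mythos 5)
Your proposal is correct and follows essentially the same route as the paper: read off $m_i=\ord_{p_i}(\omega)$ so that $\omega$ becomes a nowhere-vanishing section of $L_{\chi,\bfm}\otimes\Omega_X$, and conversely take the trivializing section, unique up to scaling. The paper compresses the local bookkeeping at the punctures into the phrase ``by definition,'' which you have simply written out explicitly.
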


\begin{proof}
If $(X, P, \chi, \omega)$ is a complex affine surface from Definition \ref{D:ComplexAffineSurface}, then let $m_i$ be the order of $\omega$ at $p_i$. By definition, $\omega$ defines a nowhere-vanishing holomorphic section of $L_{\chi, \bfm}$, which is consequently a trivial bundle.

Conversely, if $(X, P, \chi, \bfm)$ is a complex affine surface from Definition \ref{D:CAS-scaling}, then $L_{\chi, \bfm} \otimes \Omega_X$ is holomorphically trivial and hence admits a nowhere-vanishing section $\omega$ that is unique up to scaling.
\end{proof}

\subsection{Affine structures with fixed cone points}
\label{sub:affinecone}

Given a Riemann surface $X$, marked points $P = \{p_1, \ldots, p_n\}$,
and $\bfm = (m_1, \ldots,m_n)$ summing to $2g-2$, we will now show that there
exists a unique complex affine structure up to the exponential action
by $\alpha \in \Omega(X)$, a unique dilation structure up to the
exponential action by $\alpha \in \Omega(X)$ with
$\Im(\alpha) \in H^1(X,2\pi\mathbb{Z})$, and a unique flat structure.

\begin{definition}
\label{def:exponentiation}
Suppose that $(X, P, \chi, \omega)$ is a complex affine surface and
that $\alpha \in \Omega(X)$. Then $s_\alpha \otimes \omega$ is a
meromorphic section of $L_{\chi_\alpha} \otimes L_{\chi}$ with no
poles or zeros on $X \setminus P$ (see Remark \ref{R:Section} for the
definition of $s_\alpha$ and $\chi_\alpha$) and so
$(X, P, \chi_\alpha \chi, s_\alpha \otimes \omega)$ is a complex
affine surface, which we denote by $\alpha \cdot \omega$.  We call
this the \emph{exponential action}.
\end{definition}

\begin{remark}
  We note that when $\alpha \in \Omega(X)$, the exponential action
  does not change the location or cone angles of a complex affine
  surface. If we generalize this action to allow for $\alpha$ 
  to have simple poles, then at every pole $p_i$ of $\alpha$, the section
  $s_\alpha \otimes \omega$ has order
  $\ord_{p_i} \omega + \Res_{p_i}\alpha$ at $p_i$. That is, the
  complex cone angle at $p_i$ changes by $2\pi(\Res_{p_i}\alpha)$
  under the action of $\alpha$.
\end{remark}

We end this section by proving results about the existence of complex
affine, dilation, and flat structures with prescribed cone angle
data. Theorem \ref{thm:veech} is due to Veech and Corollary~\ref{C:Troyanov} is due to Troyanov when $\Re(\ord_{p_i} \omega) > -1$
at every cone point $p_i$. 

\begin{proof}[Proof of Theorem~\ref{thm:veech}]
  The degree of $L_{\chi, \bfm} \otimes \Omega_X$ is $2g-2 - \sum m_i$
  by Proposition~\ref{P:DM}.  Since a trivial bundle has degree $0$,
  an affine structure must have $\sum m_i = 2g-2$.
  
  Let $\chi \in H^1(X \setminus P, \cx^*)$ be any character so that
  $\chi(\gamma_j) = e^{2 \pi i m_j}$ for all $j$, where $\gamma_j$ is
  a positively oriented loop around $p_j$. By
  Proposition~\ref{prop:kernel}, there is $\rho \in H^1(X, \cx^*)$ so
  that $L_\rho^{-1} = L_{\chi, \bfm} \otimes \Omega_X$. Therefore,
  $L_{\rho \chi, \bfm} \otimes \Omega_X$ is holomorphically trivial
  and hence $(X, P, \rho \chi, \bfm)$ is a complex affine surface. We
  are done by Proposition~\ref{L:Equivalent}.

  Suppose now that $(X, P, \chi_1, \bfm)$ and $(X, P, \chi_2, \bfm)$
  are complex affine structures that define meromorphic sections
  $\omega_i$ of $L_{\chi_i} \otimes \Omega_X$ that do not have zeros
  or poles on $X \setminus P$. Then $\frac{\omega_1}{\omega_2}$ is a
  nowhere vanishing section of $L_{\chi_1 \chi_2^{-1}}$, which we view
  as a bundle over $X$ (not merely $X \setminus P$). By Proposition
  \ref{prop:kernel}, $\chi_1 \chi_2^{-1} = \chi_\alpha$ for some
  $\alpha \in \Omega(X)$. It follows that
  $\omega_1 = s_\alpha \otimes \omega_2$, as desired.

  The analogous statements for the action of $\Omega(X, P)$ are proved similarly.
   \end{proof}
  
 \begin{cor}\label{C:DilationFiber}
   Given a closed Riemann surface $X$ of genus $g$, points
   $P = (p_1, \ldots, p_n)\subset X$, and complex numbers
   $\bfm = (m_1, \ldots, m_n)$ with $\sum m_i = 2g-2$ and
   $\Re(m_i) \in \mathbb{Z}$, there is a dilation structure on $X$
   with each $p_i$ a cone point of complex cone angle $2\pi(m_i+1)$.
   Any two dilation structures differ by the exponential action of
   $\alpha \in \Omega(X)$ where
   $[\mathrm{Im}(\alpha)] \in H^1(X, 2\pi\mathbb{Z})$.
 \end{cor}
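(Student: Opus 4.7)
The plan is to derive existence from Theorem \ref{thm:veech} together with a Hodge-theoretic surjectivity, and to derive uniqueness directly by analyzing which $\alpha \in \Omega(X)$ preserve the property of having $\mathbb{R}_+$-valued holonomy. First I would invoke Theorem \ref{thm:veech} to obtain an affine structure $(X, P, \chi, \bfm)$. Using the topological splitting $\mathbb{C}^* \cong \mathbb{R}_+ \times S^1$, I would decompose $\chi = |\chi|\cdot \bar\chi$ with $\bar\chi \in H^1(X \setminus P, S^1)$. The integrality hypothesis $\mathrm{Re}(m_j) \in \mathbb{Z}$ gives $\chi(\gamma_j) = e^{2\pi i m_j} = e^{-2\pi\,\mathrm{Im}(m_j)} \in \mathbb{R}_+$ around each puncture, so $\bar\chi(\gamma_j) = 1$ and $\bar\chi$ descends to a class in $H^1(X, S^1)$.

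The key step for existence is to find $\alpha \in \Omega(X)$ with $\chi_\alpha/|\chi_\alpha| = \bar\chi^{-1}$, since then $\chi \cdot \chi_\alpha$ is $\mathbb{R}_+$-valued and the resulting affine surface is a dilation structure with the required cone orders. Since $\chi_\alpha(\gamma) = e^{\int_\gamma \alpha}$, the argument of $\chi_\alpha(\gamma)$ is $\int_\gamma \mathrm{Im}(\alpha) \bmod 2\pi$, so the map $\alpha \mapsto \chi_\alpha/|\chi_\alpha|$ factors as
\[
\Omega(X) \xrightarrow{\alpha \mapsto [\mathrm{Im}\,\alpha]} H^1(X, \mathbb{R}) \twoheadrightarrow H^1(X, \mathbb{R}/2\pi\mathbb{Z}) \cong H^1(X, S^1).
\]
The first arrow is a real linear isomorphism by the Hodge decomposition $H^1(X, \mathbb{C}) = \Omega(X) \oplus \overline{\Omega(X)}$: it is injective because a real holomorphic $1$-form on a closed Riemann surface vanishes (from $\int_X \alpha \wedge \bar\alpha = 0$), and then surjective by the real dimension count $\dim_{\mathbb{R}} \Omega(X) = 2g = \dim H^1(X, \mathbb{R})$. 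The second arrow is surjective because $H^2(X, 2\pi\mathbb{Z}) \cong \mathbb{Z}$ is torsion-free. This yields the desired $\alpha$.

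For uniqueness, given two dilation structures $(X, P, \chi_i, \bfm)$, Theorem \ref{thm:veech} produces a unique $\alpha \in \Omega(X)$ with $\chi_2 = \chi_1 \cdot \chi_\alpha$, whence $\chi_\alpha = \chi_2/\chi_1$ must itself be $\mathbb{R}_+$-valued. This forces $e^{\int_\gamma \alpha} \in \mathbb{R}_+$ for every $\gamma \in H_1(X, \mathbb{Z})$, equivalently $\int_\gamma \mathrm{Im}(\alpha) \in 2\pi\mathbb{Z}$, i.e.\ $[\mathrm{Im}(\alpha)] \in H^1(X, 2\pi\mathbb{Z})$ as claimed. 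The one substantive input beyond Theorem \ref{thm:veech} is the Hodge-theoretic isomorphism $\Omega(X) \xrightarrow{\sim} H^1(X,\mathbb{R})$ via $\alpha \mapsto [\mathrm{Im}(\alpha)]$; everything else amounts to tracing through definitions.
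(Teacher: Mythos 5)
Your proposal is correct and follows essentially the same route as the paper's proof: obtain an affine structure from Theorem~\ref{thm:veech}, observe that the integrality of $\Re(m_j)$ makes the $S^1$-part of the holonomy descend to $H^1(X,S^1)$, kill it by choosing $\alpha\in\Omega(X)$ with $[\Im\alpha]$ a prescribed lift (via the Hodge-theoretic isomorphism $\Omega(X)\cong H^1(X,\mathbb{R})$), and deduce uniqueness from the freeness and transitivity in Theorem~\ref{thm:veech} together with the observation that $\chi_\alpha$ being $\mathbb{R}_+$-valued forces $[\Im\alpha]\in H^1(X,2\pi\mathbb{Z})$. The only difference is that you spell out the surjectivity of $H^1(X,\mathbb{R})\to H^1(X,\mathbb{R}/2\pi\mathbb{Z})$ and the uniqueness step in more detail than the paper does.
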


 \begin{rem}
   In other terms, the set of dilation structures $D(X, P, \bfm)$ on
   $X$ with cone points at $P$ of fixed orders $\bfm$ is a torsor for
   $H^1(X, \zed)$ via the exponential action, where we identify
   $\Omega(X)$ with $H^1(X, \mathbb{R})$ by $\alpha \mapsto [\mathrm{Im}(\alpha)/2\pi]$.
 \end{rem}

 \begin{proof} By Theorem \ref{thm:veech}, let $(X, P, \chi, \bfm)$ be
   a complex affine structure. Notice that
   $\frac{|\chi|}{\chi} \in H^1(X, S^1)$ (the fact that $\chi(\gamma)$
   is a positive real number for any loop $\gamma$ around a puncture
   is what allows us to take the cohomology of $X$ and not just
   $X \setminus P$). There is a lattice of elements
   $\rho \in H^1(X, \mathbb{R})$ so that
   $e^{i\rho} = \frac{|\chi|}{\chi}$. By the Hodge theorem, any such
   cohomology class is the real part of a unique holomorphic
   $1$-form. Choose $\alpha \in \Omega(X)$ so that
   $[\Im \alpha] = \rho$. Therefore,
   $(X, P, \chi_{\alpha} \chi, \bfm)$ is a complex affine structure
   and, since $\chi_{\alpha} \chi = e^{[\Re\alpha]}|\chi| \in H^1(X,\mathbb{R})$, it is a dilation
   surface structure. The final claim is immediate from the final
   claim of Theorem \ref{thm:veech}.
\end{proof}

 \begin{cor}[Troyanov, \cite{troyanov}]\label{C:Troyanov}
   Given a closed Riemann surface $X$ of genus $g$, points
   $P = \{p_1, \ldots, p_n\}\subset X$, and real
   $\bfm = (m_1, \ldots, m_n)$ with $\sum m_i = 2g-2$, there is a
   unique flat structure (up to scaling) on $X$ with each $p_i$ a cone
   point with cone angle $2 \pi (m_i+1)$.
 \end{cor}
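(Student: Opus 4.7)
The plan is to mirror the proof of Corollary \ref{C:DilationFiber}, replacing the lattice $H^1(X, 2\pi\bZ)$ with the full cohomology $H^1(X, \bR)$. I will produce an affine structure via Theorem \ref{thm:veech}, then twist it by a suitable exponential action to force its holonomy character to take values in $S^1$, and finally deduce uniqueness up to scaling from the injectivity of the real-part Hodge isomorphism.

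For existence, I would start from an affine surface $(X, P, \chi, \bfm)$ furnished by Theorem \ref{thm:veech}; this exists because $\sum m_i = 2g-2$. Since each $m_j$ is real, $\chi(\gamma_j) = e^{2\pi i m_j}$ lies in $S^1$, so $|\chi|$ has trivial holonomy around every puncture and hence descends to a class in $H^1(X, \bR_+)$. Write $|\chi| = e^\rho$ for a unique $\rho \in H^1(X, \bR)$. By the Hodge theorem the real-part map $\Omega(X) \to H^1(X, \bR)$, $\alpha \mapsto [\Re\alpha]$, is an $\bR$-linear isomorphism (both sides have real dimension $2g$, and injectivity follows because $\Re\alpha$ is already harmonic, so vanishing in cohomology forces $\Re\alpha = 0$ pointwise and hence $\alpha = 0$). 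Choose the unique $\alpha \in \Omega(X)$ with $[\Re\alpha] = \rho$. Then the affine surface $(X, P, \chi_{-\alpha}\chi, \bfm)$ obtained by the exponential action of $-\alpha$ has character of absolute value $e^{-[\Re\alpha]}|\chi| = 1$, so it is the desired flat structure.

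For uniqueness, suppose $(X, P, \chi_1, \bfm)$ and $(X, P, \chi_2, \bfm)$ are two flat structures with the same cone data. By the uniqueness clause of Theorem \ref{thm:veech} they differ by the exponential action of a unique $\beta \in \Omega(X)$, so $\chi_2 = \chi_\beta \chi_1$. Flatness of both forces $|\chi_\beta| = 1$, i.e.\ $[\Re\beta] = 0$, and injectivity of the real-part Hodge map then forces $\beta = 0$. Hence the two flat structures coincide as elements of $A(X, P, \bfm)$, i.e.\ their defining sections agree up to the constant-scaling ambiguity built into Definition \ref{D:ComplexAffineSurface}. I do not anticipate any serious obstacle, since this is a strict specialization of Corollary \ref{C:DilationFiber}; the one point that needs care is the descent of $|\chi|$ from $H^1(X \setminus P, \bR_+)$ to $H^1(X, \bR_+)$, which uses crucially that $\bfm$ is real rather than merely having integral real parts.
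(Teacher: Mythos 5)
Your proposal is correct and follows essentially the same route as the paper: produce an affine structure via Theorem \ref{thm:veech}, use the Hodge-theoretic isomorphism $\alpha \mapsto [\Re\alpha]$ to choose the exponential twist that makes the holonomy unitary, and deduce uniqueness from the transitivity/freeness clause of Theorem \ref{thm:veech} together with injectivity of the real-part map. The one point you flag as needing care --- that realness of $\bfm$ is what lets $|\chi|$ descend to $H^1(X,\mathbb{R}_+)$ --- is exactly the parenthetical the paper makes as well.
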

 
 As a warning, we note that in Figure \ref{F:dilation-tori:sub2} we produced a one-parameter family of flat structures in $\mathcal{A}_{1,2}$ that map to a single point in $\mathcal{M}_{1,2}$. Therefore, it is crucial to bear in mind that, for Corollary \ref{C:Troyanov}, given $(X, P) \in \cM_{g,n}$ there is a unique flat structure on $(X, P)$ in $\mathcal{A}_{g,n}(\bfm)$, but not necessarily in $\mathcal{A}_{g,n}$.

\begin{proof}
  By Theorem \ref{thm:veech}, there is an affine structure
  $(X, P, \chi, \bfm)$.  Notice that
  $|\chi|^{-1} \in H^1(X, \mathbb{R}_+)$ (the fact that
  $\chi(\gamma) \in S^1$ for any loop $\gamma$ around a puncture is
  what allows us to take the cohomology of $X$ and not just
  $X \setminus P$). There is a unique $\rho \in H^1(X, \mathbb{R})$ so
  that $e^{\rho} = |\chi|^{-1}$. By the Hodge theorem, $\rho$ is the
  real part of a unique holomorphic $1$-form $\alpha$. Then,
  $e^{\Re \alpha} = |\chi|^{-1}$. Therefore,
  $(X, P, \chi_\alpha \chi, \bfm)$ is a complex affine structure and,
  since $\chi_\alpha \chi = e^{i[\Im\alpha]}\frac{\chi}{|\chi|} \in H^1(X,S^1)$, it is a flat
  surface. Uniqueness follows from the final claim of
  Theorem \ref{thm:veech} as any other $(X,P,\chi_\beta\chi,\bfm)$ would not have holonomy in $H^1(X,S^1)$. 

\end{proof}

\section{Moduli spaces of affine and dilation surfaces}\label{S:Moduli}

We now construct the moduli spaces of affine surfaces and dilation surfaces.  As usual, we will first construct their corresponding
\Teichmuller spaces and take the quotient by the mapping class group.

Let $\Sigma$ be a fixed genus $g$ surface with $n+1$ marked points
$\{s_1, \ldots, s_{n+1}\}$.  We denote by $S$ the first
$n$ of these points.  Let $\Teich$ be the
\Teichmuller space of $(\Sigma, S)$.  The product $B = \TeichV$ is
the moduli space of marked surfaces together with holonomy data (the definition of $V_{\Sigma, S}$ appears in Definition \ref{D:HolonomyDataModuli}).  Let
$\mathcal{X}\to B$ denote its universal curve, which is the product of the universal curve of $\Teich$ with $V_{\Sigma, S}$.
We denote by $P_i$ the section of $\mathcal{X}$ corresponding to the
$i$th marked point and denote by $P$ their union.

Recall that the universal curve over $\Teich$ may be defined as the quotient
\begin{equation*}
  \Teich[g, n+1] / \pi_1(\Sigma \setminus \{s_1, \ldots s_n\}, s_{n+1}),
\end{equation*}
where the group acts via the point-pushing map
$$\pi_1(\Sigma \setminus \{s_1, \ldots s_n\}) \to \Mod[g, n+1].$$ Define
the \emph{universal flat bundle}
$\mathcal{L} \to \mathcal{X} \setminus P$ as the quotient
\begin{equation*}
  \mathcal{L} = (\Teich[g, n+1] \times V_{\Sigma, S} \times \cx )/ \pi_1(\Sigma \setminus \{s_1, \ldots s_n\}, s_{n+1}),
\end{equation*}
where the group acts on $\cx$ via the character determined by the
holonomy data. Let $\mu_i\colon \cL \ra \bC$ and and
$\bfmu\colon\cL \ra \bC^n$ be the functions that record the $i$th entry
of $\bfm$ or all of $\bfm$ respectively.

The product $\mathcal{L} \otimes \Omega_{\mathcal{X}/B}$, where
$\Omega_{\mathcal{X}/B}$ is the relative cotangent bundle, is a line
bundle over $\mathcal{X} \setminus P$, which we may extend to a line
bundle $\mathcal{K}\to\mathcal{X}$ as we did in the previous section.  More
precisely, on an open set $U\subset\mathcal{X}$ meeting $P_i$, let
$z_i$ be a local holomorphic function with $(z_i) = P_i \cap U$.  We
extend $\mathcal{K}$ over $U$ by declaring $z_i^{\mu_i} dz_i$ to be a
nonzero holomorphic section over $U$.

As the restriction of $\mathcal{K}$ to any fiber of the universal
curve is degree $0$, it defines a holomorphic section
$\sigma\colon B\to \Jac(\mathcal{X}/B)$ of the relative Jacobian
variety (which can be defined as the quotient of the dual of the Hodge
bundle over $B$ by $H_1(\Sigma, \zed)$).  The preimage $\sigma^{-1}(0)$
of the zero-section is the locus of surfaces $X$ with holonomy data
$(\chi, \bfm)$ such that $L_{\chi, \bfm}$ is trivial.  We define the
\emph{\Teichmuller space of affine surfaces}
$\TeichA = \sigma^{-1}(0) \subset B$.  We have a holomorphic map
$\bfmu = (\mu_1, \ldots, \mu_n) \colon \TeichA \to \cx^n$ which
records the orders of the affine structure at each $p_i$.  Given
complex number $\bfm = (m_1, \ldots, m_n)$ whose sum is $2g-2$ we
denote by $\TeichA(\bfm)\subset \TeichA$ the fiber of $\bfmu$ over
$\bfm$. The mapping class group acts on these spaces, and we define
the moduli space of affine surfaces $\ModA(\bfm)$ and $\ModA$ to be
their quotients.

\begin{theorem}
  $\TeichA(\bfm)$ and $\TeichA$ are complex submanifolds of $B$.
  Moreover, the forgetful maps $\TeichA(\bfm)\to\Teich$ and
  $\TeichA\to\Teich$ are submersions.
\end{theorem}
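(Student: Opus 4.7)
The plan is to verify that the holomorphic section $\sigma \colon B \to \Jac(\mathcal{X}/B)$ is transverse to the zero section, and to deduce both the submanifold and submersion statements from the same computation by controlling the derivative of $\sigma$ in directions tangent to the fibers of the forgetful map $B \to \Teich$.

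\textbf{Step 1 (transversality).} At a point $b = (X, \chi, \bfm) \in \sigma^{-1}(0)$, the fiber of $\Jac(\mathcal{X}/B)$ over $b$ is $\Pic^0(X)$ with tangent space $H^1(X, \mathcal{O}_X)$; transversality of $\sigma$ to the zero section at $b$ is equivalent to surjectivity of the vertical component $\phi \colon T_b B \to H^1(X, \mathcal{O}_X)$ of $d\sigma$. I would restrict attention to variations in $T_\chi V_{X,P}(\bfm)$, i.e., those keeping both $X$ and $\bfm$ fixed. The set $V_{X,P}(\bfm)$ is a torsor for $H^1(X, \cx^*)$, and after choosing a basepoint, the restriction of $\sigma$ becomes a translate of the classical map $H^1(X, \cx^*) \to \Pic^0(X)$, $\chi' \mapsto [L_{\chi'}]$. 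Its derivative at the identity is the Hodge projection $H^1(X, \cx) \twoheadrightarrow H^1(X, \mathcal{O}_X)$, which is surjective by the diagram in Figure \ref{F:CD}. Hence $\phi$ is surjective already on this restricted subspace, and $\sigma$ is transverse to the zero section.

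\textbf{Step 2 (submanifold and submersion for $\TeichA$).} Transversality and the implicit function theorem give that $\TeichA = \sigma^{-1}(0)$ is a complex submanifold of $B$ of codimension $g$. Writing locally $B = \Teich \times V_{\Sigma, S}$ and $T_b B = T_X \Teich \oplus T_{(\chi, \bfm)} V_{\Sigma, S}$, any $v \in T_X \Teich$ can be lifted to $T_b \TeichA = \ker d\sigma$: by the surjectivity from Step 1 there exists $w \in T_{(\chi,\bfm)} V_{\Sigma, S}$ with $d\sigma(0, w) = -d\sigma(v, 0)$, so $(v, w) \in T_b \TeichA$ projects to $v$. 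Thus $\TeichA \to \Teich$ is a submersion.

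\textbf{Step 3 (fixed $\bfm$).} On the component of $V_{\Sigma, S}$ where $\sum m_i = 2g - 2$, the map $\bfmu$ is a holomorphic submersion onto the hyperplane $\{\sum m_i = 2g - 2\} \subset \cx^n$, so $\bfmu^{-1}(\bfm)$ is a complex submanifold of $B$. Repeating Steps 1 and 2 with $B$ replaced by $\bfmu^{-1}(\bfm)$ is legitimate because the transversality argument of Step 1 already used only variations with $\bfm$ held fixed; this yields that $\TeichA(\bfm)$ is a complex submanifold of $B$ whose forgetful map to $\Teich$ is a submersion.

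The only substantive point is Step 1, namely recognizing that $\sigma|_{V_{X,P}(\bfm)}$ is a translate of the exponential-to-Jacobian map $H^1(X, \cx^*) \to \Pic^0(X)$ and reading off the surjectivity of its derivative from the exact diagram of Figure \ref{F:CD}; everything else is a routine transversality-to-submersion argument.
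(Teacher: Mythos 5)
Your proposal is correct and follows essentially the same route as the paper: the crux in both is identifying the restriction of $\sigma$ to $\{X\}\times V_{\Sigma,S}(\bfm)$ as a translate of $H^1(X,\cx^*)\to\Pic^0(X)$ and reading surjectivity of its derivative $H^1(X,\cx)\to H^1(X,\mathcal{O}_X)$ off the diagram in Figure \ref{F:CD}. The remaining differences are organizational (you phrase the conclusion as transversality of the section $\sigma$ to the zero section and restrict to $\bfmu^{-1}(\bfm)$ first, whereas the paper composes with $\pi_{\Jac}$ and instead checks that $\bfmu$ is a submersion on $\TeichA$), and both yield the submersion claim by the same lifting of tangent vectors.
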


\begin{proof}
  Let $\cX'$ denote the universal curve over $\Teich$. The projection
  $\pi\colon B \ra \Teich$ induces a map
  $\pi_{\Jac}\colon \Jac(\cX/B) \ra \Jac(\cX'/\Teich)$.  Since
  $\TeichA$ is also the preimage $(\pi_{\Jac} \circ \sigma)^{-1}(0)$,
  to show that $\TeichA$ is a complex submanifold, it suffices to show
  that $\pi_{\Jac} \circ \sigma$ is a submersion.  For this, it
  suffices to show that for every $\bfm$ and $X \in \Teich$, the
  restriction of $\sigma$ to
  $\sigma_X\colon\{X\} \times V_{(\Sigma, S)}(\bfm) \ra \Jac(X)$ is a
  submersion.

  Let $(X, P,\chi_0, \bfm)$ be a complex affine structure on $X$,
  which exists by Theorem \ref{thm:veech}.  Any element of
  $V_{(\Sigma, S)}(\bfm)$ can be written as $\rho \chi_0$ where
  $\rho \in H^1(X, \mathbb{C}^*)$. Moreover,
  $\sigma_X(\rho \chi_0) = L_{\rho} \otimes L_{\chi_0}\otimes
  \Omega_X$.  Therefore, $\sigma_X$ is a translate of the natural map
  $H^1(X, \cx^*)\to \Pic^0(X)$, which is the rightmost vertical arrow
  in Figure \ref{F:CD}. As seen in Figure \ref{F:CD}, the derivative
  of this map is $H^1(X, \cx) \to H^1(X, \mathcal{O}_X)$, which is
  surjective by the vertical short exact sequence.  This completes the
  proof that $\TeichA$ is a complex submanifold.
  
  To see that $\TeichA(\bfm)$ is a complex submanifold of $\TeichA$,
  we note that the map $\bfmu\colon\TeichA\to\cx^n$, recording the
  cone angles is a submersion.  This follows from the observation that
  the restriction of $\bfmu$ to an $\Omega(X, P)$-orbit is affine and
  surjective.

Given a point $p \in \TeichA(\bfm)$, 
\[ T_p \TeichA(\bfm) = \left( d\left( \pi_{\Jac} \circ \sigma \right)_p \right)^{-1} \left( T_{\pi_{\Jac} \circ \sigma(p)} 0 \right), \]
where $0$ denotes the zero section of $\Jac(\cX'/\Teich)$. In particular, the map $(\pi_{\Jac} \circ \sigma) \big|_{\TeichA(\bfm)}$ from $\TeichA(\bfm)$ to $0$ is a submersion. Since the forgetful map from $\TeichA(\bfm)$ to $\Teich$ is a composition of $(\pi_{Jac} \circ \sigma) \big|_{\TeichA(\bfm)}$ and the projection from $0$ to $\Teich$, both of which are submersions, the final claim follows.
\color{black}
\end{proof}

\begin{proof}[Proof of Theorem \ref{thm:affine_covering}]
  Given a point $p \in \Teich$, there is an open neighborhood $U$ of
  $p$ and a holomorphic section $s\colon U \ra \TeichA(\bfm)$ (by the
  implicit function theorem).  Let $\Omega U$ denote the Hodge bundle
  over $U$ and choose a trivialization that identifies it with
  $U \times \mathbb{C}^g$. The map
  $f\colon U \times \mathbb{C}^g \ra \pi^{-1}(U)$ that sends a point
  $(u, \alpha)$ to the exponential action of $\alpha$ on $s(u)$ is a
  biholomorphism by Theorem \ref{thm:veech}. This construction gives
  $\pi$ the structure of an affine bundle. In particular,
  $\pi\colon \TeichA(\bfm) \ra \Teich$ is an $\Omega \Teich$ torsor.
  Such torsors are classified by $H^1(\Teich, \Omega \Teich)$.  Since
  $\Teich$ is a Stein manifold, any higher cohomology group is
  trivial, so in fact $\TeichA(\bfm)$ is biholomorphically equivalent
  to $\Teich\times \cx^{g}$.

  As the forgetful maps are equivariant with respect to the action of
  the mapping class group, and moreover this action preserves the
  affine structures of the fibers, it follows that
  $\ModA(\bfm)\to \moduli$ and $\ModA\to \moduli$ are holomorphic
  affine bundles.

  To identify the fiber of $\ModA(\bfm)$ over $(X, P)$ with
  $\Omega(X)/\mathrm{Aut}(X,P)$, we note that the unique flat metric
  $\rho$ from Corollary~\ref{C:Troyanov}
   with cone angles $\bfm$ at $P$ is an affine structure which
  must be fixed by the action of $\mathrm{Aut}(X,P)$.  The
  exponential action then gives via Theorem~\ref{thm:veech} an $\mathrm{Aut}(X, P)$-equivariant bijection
  $\omega \mapsto \omega\cdot\rho$ between $\Omega(X)$ and the set of
  affine structures $A(X, P, \bfm)$ with these cone angles.
  The fibers of $\ModA$ are identified similarly.
\end{proof}

We note that by the remark after Definition \ref{def:exponentiation}
of the exponential action, similar reasoning as in the proof of
Theorem \ref{thm:affine_covering} implies that $\TeichA$ is a
holomorphically trivial bundle of affine surfaces over $\Teich$ whose
fiber over $(X,P)$ is an affine space modeled on the bundle of
meromorphic one-forms with at worst simple poles at the marked points.

\begin{remark}
It seems to be an interesting open question whether or not the map $\ModA\to \moduli$ is the projection map of an orbifold vector bundle.
\end{remark}

We similarly define the \Teichmuller spaces and moduli spaces of
dilation surfaces  $\TeichD(\bfm)\subset\TeichA(\bfm)$ and
$\ModD(\bfm)\subset\ModA(\bfm)$, where $\bfm \in \cx^n$ is a tuple of
complex numbers with integral real part which sum to $2g-2$, to be the locus of affine surfaces
with real holonomy and orders of vanishing specified by $\bfm$.

\begin{proof}[Proof of Theorem~\ref{thm:dilation_covering}]
  Choose a section $\rho\colon\Teich\to\TeichA(\bfm)$, let the holonomy be
  $\chi\colon\Teich\to H^1(\Sigma\setminus S, \cx^*)$, and choose a lift $\tilde{\chi}\colon \Teich \to
  H^1(\Sigma\setminus S, \cx)$ so that $e^{\tilde{\chi}} = \chi$.
  Given a cohomology class $\gamma\in H^1(\Sigma , 2\pi i \bZ)$, let
  $\alpha_\gamma$ be the real analytic section of the Hodge bundle
  over $\Teich$ whose cohomology class has imaginary part specified by
  \begin{equation*}
    [\Im \alpha_\gamma] = \gamma - \Im \tilde{\chi}.
  \end{equation*}
  The exponential action then gives a real analytic isomorphism
  $F\colon \Teich \times  H^1(\Sigma , 2\pi i \bZ) \to \TeichD(\bfm)$,
  defined by
  \begin{equation*}
    F(X, \gamma) = e^{\int \alpha_\gamma} \rho(X).
  \end{equation*}
  In particular, $\TeichD\to \Teich$ is a covering map, as is the
  quotient $\ModD\to \moduli$.
\end{proof}

\begin{rem}
  As Figure \ref{F:dilation-tori} illustrates, it is often natural to
  gather genus $g$ dilation surfaces into a \emph{superstratum}
  $\mathcal{H}^{dil}(\mathbf{\kappa})$ where $\kappa$ consists of
  integers that sum to $2g-2$ and $\mathcal{H}^{dil}(\mathbf{\kappa})$
  is the set of dilation surfaces whose cone points orders have real
  parts given by $\kappa$. The proof of Theorem
  \ref{thm:dilation_covering} makes it clear that the lift of the
  superstratum to Teichm\"uller space is real analytic submanifold
  foliated by strata of dilation surfaces and whose fiber over a
  surface $X$ corresponds to the holomorphic $1$-forms in
  $\Omega(X, P)$ for which the imaginary part of the period of any
  essential simple closed curve belongs to $2\pi \mathbb{Z}$.
\end{rem}

\section{Framings and the topology
  of $\ModD$}

Suppose that $(X, P, \chi, \omega)$ is a dilation surface. Recall that
the section $\omega$ is only defined up to multiplication by a
constant in $\mathbb{C}^*$.  Fix a specific section $\omega$. Since the character $\chi$ assumes positive real values, $(\widetilde{X} \times \mathbb{R})/\pi_1(X, *)$ and $(\widetilde{X} \times i\mathbb{R})/\pi_1(X, *)$ are oriented real line bundles that are subbundles of $L_\chi$. The \emph{horizontal vector field} of $X \setminus P$ is given by elements $v$ of the unit tangent bundle so that $\omega(v)$ belongs to $(\widetilde{X} \times \mathbb{R}_{> 0})/\pi_1(X, *)$.  This vector field defines a \emph{framing}, i.e. a trivialization $f\colon  T^1(X \setminus P) \rightarrow (X \setminus P) \times S^1$. If we had chosen a section $(re^{i\theta}) \omega$, where $r$ and $\theta$ are positive real numbers, instead of $\omega$, then the trivialization would change by postcomposing by multiplication by $e^{-i\theta}$ on the $S^1$ factor.

Given a $C^1$ immersion $\gamma\colon S^1 \ra X \setminus P$, its
derivative is a map to the unit tangent bundle, which can be
identified via the framing with $(X \setminus P) \times S^1$. The
composition of the derivative and the projection onto the $S^1$ factor
is a map between circles, whose degree $\tau_\omega(\gamma)$
 is called the \emph{turning number of $\gamma$}. Since replacing $\omega$ with $c\omega$ for $c \in \mathbb{C}^*$ only changes the framing by postcomposition with a rotation, the turning number of $\gamma$ is unaffected. Therefore, the turning number function is well-defined for a dilation surface.

\begin{theorem}\label{T:TurningNumber}
Let $(X, P, \chi_1, \omega_1)$ and $(X, P, \chi_2, \omega_2)$ be
dilation surfaces. Let $\alpha \in \Omega(X)$ so that $\omega_2 =
s_\alpha \otimes \omega_1$. Then for any $C^1$ embedded loop
$\gamma$, \[ \tau_{\omega_2}(\gamma) - \tau_{\omega_1}(\gamma) =
  \frac{1}{2\pi}\int_\gamma \mathrm{Im}(\alpha). \]
\end{theorem}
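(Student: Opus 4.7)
The plan is to compare the two framings pointwise in a local coordinate and then recognize the difference in turning numbers as the degree of the resulting rotation along $\gamma$. Recall that $f_\omega$ sends the horizontal unit vector (i.e.\ $v \in T^1$ with $\omega(v)\in\bR_{>0}$) to $1\in S^1$, so the turning number $\tau_\omega(\gamma)$ records the net rotation of $\gamma'(t)$ relative to this horizontal field as $t$ traverses $S^1$.

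In a simply connected coordinate patch $U\subset X\setminus P$ with coordinate $z$, write $\omega_i = f_i\,dz$. By Remark~\ref{R:Section}, after choosing a holomorphic primitive $g$ of $\alpha|_U$ and a trivialization of $L_{\chi_\alpha}|_U$, we may take $s_\alpha|_U = e^g$, so $f_2 = e^g f_1$. A unit tangent vector at $x$, identified with $c+id\in\bC$ under $T_xX\cong\bC$, is horizontal for $\omega_i$ exactly when $f_i(x)(c+id)\in\bR_{>0}$, i.e.\ has argument $-\arg f_i(x)$. Consequently the horizontal direction for $\omega_2$ is rotated by $-\arg e^{g(x)} = -\Im g(x)$ relative to that for $\omega_1$, so on $U$ the two framings are related by
\[
  f_{\omega_2}(v) = e^{i\,\Im g(x)}\cdot f_{\omega_1}(v), \qquad v\in T^1_xX.
\]

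I would then cover $\gamma(S^1)$ by finitely many such patches, choose consistent local lifts of $\Im g$ along $\gamma$, and observe that the difference in turning numbers equals the degree of $t\mapsto e^{i\,\Im g(\gamma(t))}$, which is $\frac{1}{2\pi}\oint_\gamma d(\Im g) = \frac{1}{2\pi}\int_\gamma \Im(\alpha)$, since $dg=\alpha$ on each patch and $\Im(\alpha)$ is a globally defined real closed $1$-form on $X\setminus P$. The only bookkeeping step is that $\Im g$ is defined only up to real constants on each patch and may not descend to a single-valued function on $X\setminus P$, but its increment along $\gamma$ is unambiguously $\int_\gamma \Im(\alpha)$; constant ambiguities in the local phases simply cancel when computing the degree. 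I expect this gluing to be the most fiddly but not genuinely difficult part; the heart of the argument is the local identification of the rotation between framings with $e^{i\,\Im g}$.
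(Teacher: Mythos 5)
Your proposal is correct and is essentially the paper's argument: both identify the two framings' ratio as the rotation $e^{i\,\Im g}$ for a primitive $g$ of $\alpha$, and compute the degree of this rotation along $\gamma$ as $\frac{1}{2\pi}\int_\gamma \Im(\alpha)$. The only cosmetic difference is that the paper performs the computation once on the universal cover (where $\omega_2 = e^{A}\omega_1$ globally, $A(z)=\int_{z_0}^z\pi^*\alpha$), whereas you work in local charts and glue, which amounts to the same bookkeeping.
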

\begin{proof}
Let $Y$ be the simply connected Riemann surface and $\pi\colon Y \ra X \setminus P$ the uniformization map.  There are holomorphic functions $f_i(z)$ so that $\pi^*\omega_i = f_i(z)dz$ for $i \in \{1, 2\}$. Fixing $z_0 \in Y$, and letting $A(z) := \int_{z_0}^z \pi^* \alpha$, we have that 
\[ f_2(z) = e^{A(z)} f_1(z). \]
Let $s\colon [0,1] \ra Y$ be a $C^1$ arc whose projection to $X$ is a
smoothly embedded loop $\gamma$. Then $\tau_{\omega_i}(\gamma)$ is the
degree of $\frac{f_i(s)s'}{|f_i(s)s'|}$, which may be taken to be a
map between circles. Given two maps between circles $g_1$ and $g_2$,
$\mathrm{deg}(g_1 \cdot g_2) = \mathrm{deg}(g_1) +
\mathrm{deg}(g_2)$. Since the degree of the map
$\frac{e^{A(s)}}{|e^{A(s)}|} = e^{ i \int_{z_0}^s
  \pi^*\mathrm{Im}(\alpha)}$ is $\frac{1}{2\pi}\int_\gamma
\mathrm{Im}(\alpha)$,  we are done.
\end{proof}

\begin{cor}\label{C:turning}
Two dilation surface structures on a fixed Riemann surface (with cone points marked) coincide if and only if they determine the same framing.
\end{cor}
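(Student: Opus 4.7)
The forward implication is immediate, since the framing is constructed directly from the dilation structure. For the converse, let $(X, P, \chi_1, \omega_1)$ and $(X, P, \chi_2, \omega_2)$ be two dilation structures on the same pointed Riemann surface inducing the same framing. The plan is to pass to the universal cover, where both structures are represented by honest holomorphic one-forms, and to extract a pointwise identification from the coincidence of horizontal directions.

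Let $\pi\colon \widetilde X \to X$ be the universal cover. Each pullback $\pi^* L_{\chi_i}$ is holomorphically trivial, and since $\chi_i$ takes positive real values the bundle $L_{\chi_i}$ has a canonical $\mathbb{R}_{>0}$-subbundle; I would choose trivializations of $\pi^* L_{\chi_i}$ that identify this subbundle with $\mathbb{R}_{>0} \subset \mathbb{C}$. Such a trivialization is unique up to multiplication by a positive real constant, so the argument of the pulled back form is intrinsic. In these trivializations the lifts take the form $\pi^*\omega_i = \widetilde f_i(z)\,dz$, where $\widetilde f_i$ is nowhere-vanishing and holomorphic on $\widetilde X \setminus \pi^{-1}(P)$. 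Unwinding Definition~\ref{D:ComplexAffineSurface}, the horizontal direction at $z$ in the $i$-th structure is that of $\overline{\widetilde f_i(z)}$, so the hypothesis that the two framings agree translates into the pointwise assertion that $h := \widetilde f_2 / \widetilde f_1$ takes values in $\mathbb{R}_{>0}$.

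Then $h$ is a holomorphic function on the connected open set $\widetilde X \setminus \pi^{-1}(P)$ whose image lies in $\mathbb{R}_{>0}$; since $\mathbb{R}_{>0}$ has empty interior in $\mathbb{C}$, the open mapping theorem forces $h$ to be a constant $c > 0$. Hence $\pi^*\omega_2 = c\,\pi^*\omega_1$, and comparing equivariance under an arbitrary deck transformation $\gamma \in \pi_1(X \setminus P, *)$ gives
\[ \chi_2(\gamma)\pi^*\omega_2 = \gamma^*(c\,\pi^*\omega_1) = c\,\chi_1(\gamma)\pi^*\omega_1 = \chi_1(\gamma)\pi^*\omega_2, \]
so $\chi_1 = \chi_2$. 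Descending to $X$ yields $\omega_2 = c\,\omega_1$ as sections of $L_{\chi_1}\otimes \Omega_X$, and since the definition of a dilation structure identifies sections of $L_\chi \otimes \Omega_X$ that differ by a nonzero scalar, the two structures coincide.

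The only delicate point is the claim that $\arg \widetilde f_i$ is intrinsic to the dilation structure; this is what forces the use of trivializations compatible with the canonical $\mathbb{R}_{>0}$-subbundle of $L_{\chi_i}$. Once that canonicality is in place, the proof reduces to the open mapping theorem together with a routine equivariance check.
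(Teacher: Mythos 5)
There is a genuine gap: you have proved the wrong implication of ``same framing.'' In this paper a framing is by definition a \emph{homotopy class} of trivialization of $T^1(X\setminus P)$ (see the introduction and the identification of $F(X,P,\bfr)$ with a subset of $H^1(T^1(X\setminus P),\zed)$ via \eqref{eq:exact sequence}), and Corollary~\ref{C:turning} is used precisely to show that the map $D(X,P,\bfm)\to F(X,P,\bfr)$ to homotopy classes is injective. Your argument instead assumes that the two horizontal direction fields agree \emph{pointwise} --- that is where the assertion ``$h:=\widetilde f_2/\widetilde f_1$ takes values in $\mathbb{R}_{>0}$'' comes from --- and pointwise equality of the trivializations is a strictly stronger hypothesis than homotopy. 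A priori two distinct dilation structures could induce horizontal fields that differ at every point and yet are homotopic as trivializations; ruling this out is exactly the content of the corollary, so the open-mapping-theorem step, while correct, is answering an easier question. (A secondary, minor issue: since $\omega$ is only defined up to a scalar, even the pointwise argument of $\widetilde f_i$ is intrinsic only up to a global additive constant in the angle, so $h$ a priori takes values in a rotated ray $e^{i\theta_0}\mathbb{R}_{>0}$; this does not affect your conclusion that $h$ is constant, but the claim of intrinsicness as stated is too strong.)

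The paper's proof works with the correct homotopy-invariant data: by Corollary~\ref{C:DilationFiber} the two structures differ by the exponential action of some $\alpha\in\Omega(X)$, and by Theorem~\ref{T:TurningNumber} the turning numbers --- which are complete invariants of the homotopy class of a framing --- satisfy $\tau_{\omega_2}(\gamma)-\tau_{\omega_1}(\gamma)=\frac{1}{2\pi}\int_\gamma \Im(\alpha)$. Equality of the framings forces $\int_\gamma\Im(\alpha)=0$ for every loop $\gamma$, hence $[\Im\alpha]=0$ in $H^1(X,\mathbb{R})$, and since a holomorphic one-form on a closed surface with cohomologically trivial imaginary part vanishes, $\alpha=0$. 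To repair your proof you would need to insert exactly this step: upgrade ``homotopic framings'' to ``equal turning numbers'' and then to ``$[\Im\alpha]=0$,'' at which point your universal-cover computation (with $\Im A$ constant) becomes a reasonable way to finish.
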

\begin{proof}
By Corollary \ref{C:DilationFiber}, we are in the setting of Theorem \ref{T:TurningNumber}, whose notation we adopt. If the two framings coincide, then the two dilation surfaces have identical turning numbers. Thus, $\mathrm{Im}(\alpha) = 0$ and hence $\alpha = 0$, which implies that the two dilation surfaces are equal.
\end{proof}

The turning number of an immersed curve may be generalized to define
the turning number of any loop in $T^1(X \setminus P)$.  This defines
a class in $H^1(T^1(X \setminus P), \zed)$ (see \cite{johnson}), which
is part of the exact sequence

\begin{equation}
  0 \to H^1(X\setminus P, \zed)\to H^1(T^1(X \setminus P), \zed) \to
  H^1(S^1, \zed)\to 0  \tag{4.1}\label{eq:exact sequence},
\end{equation}
and this construction identifies $F(X, P, \boldsymbol{r})$ with the
affine subset of classes whose pairing with an oriented tangent circle
is $1$ and whose pairing with the tangent vector field of $\gamma_i$
is $r_i$.  It follows from this exact sequence that
$F(X, P, \boldsymbol{r})\subset H^1(T^1(X \setminus P), \zed)$ is a
$H^1(X, \zed)$-torsor.

\begin{proof}[Proof of Theorem~\ref{thm:dilation_covering2}]
  It follows from Corollary~\ref{C:DilationFiber} that $D(X,
  P, \bfm)$, the set of dilation structures on $X$ with cone points at $P$ of
  orders $\bfm$, is a $H^1(X, \zed)$-torsor, as is the set $F(X, P,
  \bfr)$ of framings by \eqref{eq:exact sequence}.  The forgetful map
  $D(X, P, \bfm)\to F(X, P, \bfr)$ is $H^1(X, \zed)$-equivariant by
  Theorem~\ref{T:TurningNumber}, so it must be a bijection.  The
  forgetful map $\ModD(\bfm) \to \ModF(\bfr)$ is then a bijection on
  each fiber, so must be an isomorphism of covering spaces.
\end{proof}

\begin{proof}[Proof of Theorem~\ref{T:Components}]
  Since $\ModD(\bfm)\to \moduli$ is a covering space equivalent to
  $\ModF(\bfr)$, and the fundamental group of the base is the mapping
  class group $\Mod$, the components of $\ModD(\bfm)$ are in bijection
  with the orbits of the $\Mod$-action on the set $F(X, P, \bfr)$ of
  framings.  These were classified by Kawazumi in Theorem \ref{T:Kawazumi} below.
 \end{proof}

  \begin{theorem}[Kawazumi \cite{kawazumi}]  \label{T:Kawazumi}
Given two framings $f_1$ and $f_2$ on $X - P$ with $\rho(f_1) = \rho(f_2)$, $f_1$ and $f_2$ are in the same $\mathrm{Mod}_{g,n}$-orbit if and only if one of the following occurs:
\begin{enumerate}
    \item $g=0$,
    \item $g=1$ and $\nu(f_1) = \nu(f_2)$,
    \item $g \geq 2$ and $\rho(f_1)$ contains an even number.
    \item $g \geq 2$, $\rho(f_1)$ consists entirely of odd numbers, and $\mathrm{Arf}(f_1) = \mathrm{Arf}(f_2)$.
\end{enumerate}
\end{theorem}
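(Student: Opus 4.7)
The plan is to exploit the torsor structure on $F(X,P,\bfr)$ coming from the exact sequence \eqref{eq:exact sequence} to reduce Kawazumi's classification to a question about orbits of $\Mod[g,n]$ acting compatibly with the symplectic representation $\Mod[g,n] \to \mathrm{Sp}(2g,\mathbb{Z})$ on $H^1(X,\mathbb{Z})$. First I would define the invariants $\nu$ (in genus $1$) and $\mathrm{Arf}$ (in genus $\geq 2$, all-odd case), verify $\Mod[g,n]$-invariance, and then prove transitivity on each invariant level set.

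To define the invariants: the $g=0$ case is trivial since $H^1(X,\mathbb{Z}) = 0$. In genus $1$, I would fix a reference framing $f_0 \in F(X,P,\bfr)$, so every $f$ gives $\delta_f := f - f_0 \in H^1(X,\mathbb{Z}) \cong \mathbb{Z}^2$; the orbit of $\delta_f$ under $\mathrm{SL}(2,\mathbb{Z})$ acting by its symplectic representation combined with the point-pushing translations determined by $\bfr$ is then classified by a gcd-type invariant $\nu(f)$, and $\Mod[g,n]$-invariance is immediate. For $g \geq 2$ with $\rho(f)$ entirely odd, reduction mod $2$ of $f \in H^1(T^1(X\setminus P),\mathbb{Z})$ restricts to a $\mathbb{Z}/2\mathbb{Z}$-valued quadratic refinement $q_f$ of the intersection pairing on $H_1(X,\mathbb{Z}/2\mathbb{Z})$; the odd-turning-number hypothesis is what ensures the quadratic identity is satisfied on classes represented by loops around the punctures, and $\mathrm{Arf}(f) := \mathrm{Arf}(q_f)$ is then a classical $\Mod[g,n]$-invariant.

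For transitivity on the level sets, the $g=0$ case is immediate, and $g=1$ reduces to the classical orbit classification of $\mathrm{SL}(2,\mathbb{Z}) \ltimes \mathbb{Z}^2$ on $\mathbb{Z}^2$, where gcd is the complete invariant. For $g \geq 2$ with all $r_i$ odd, I would invoke Johnson's theorem that $\Mod$ acts transitively on spin structures of fixed Arf invariant, and lift this equivariantly to framings using that the fiber of the mod-$2$ reduction $F(X,P,\bfr) \to \mathrm{Spin}(X)$ is a $2 H^1(X,\mathbb{Z})$-torsor on which the Torelli group acts transitively via bounding-pair Dehn twists.

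The main obstacle is the case $g \geq 2$ with at least one even $r_i$, where one must show that all framings with common $\bfr$ lie in a single $\Mod[g,n]$-orbit despite the absence of an Arf invariant. The plan is to use the even puncture to produce a specific Dehn twist $T_\gamma$ about a simple closed curve $\gamma$ (for instance, a curve encircling the even-turning puncture together with a nearby handle) whose action on framings is translation by a class in $H^1(X,\mathbb{Z})$ whose mod-$2$ reduction is odd against the would-be quadratic refinement, thereby merging the two Arf-classes into a single orbit. The crux is a Dehn-twist formula for the action on framings in which the parity of $r_i$ directly controls whether the twist preserves or exchanges the mod-$2$ invariant; combining such twists with Torelli-group translations then realizes transitivity on the whole $H^1(X,\mathbb{Z})$-torsor.
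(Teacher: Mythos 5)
First, a point of comparison: the paper gives no proof of this statement at all --- it is quoted from Kawazumi and used as a black box in the proof of Theorem \ref{T:Components} --- so there is no internal argument to measure yours against. Judged on its own terms, your architecture (work on the $H^1(X,\zed)$-torsor $F(X,P,\bfr)$, define $\mathrm{Mod}_{g,n}$-invariants, prove transitivity on level sets) is the right one and matches the literature, and your parity bookkeeping in the all-odd case is correct: $q=f+1 \bmod 2$ vanishes on the nullhomotopic loops $\gamma_i$ exactly when every $r_i$ is odd, which is precisely when a quadratic refinement, hence an Arf invariant, exists.

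There are, however, genuine gaps. The most concrete is genus one: the orbit count there is $\varphi(\gcd(\boldsymbol{\kappa}))$ (Theorem \ref{T:Components}(c)), i.e.\ the complete invariant is a \emph{unit} modulo $\gcd(\boldsymbol{\kappa})$, and this is not what ``gcd is the complete invariant for $\mathrm{SL}(2,\zed)\ltimes\zed^2$ acting on $\zed^2$'' produces (that would give one orbit per divisor, or infinitely many). The action of $\mathrm{Mod}_{1,n}$ on the torsor is affine with translation part governed by a crossed homomorphism (the Chillingworth/Earle class), not by an honest semidirect product with a sublattice, and identifying the resulting invariant is the actual content of that case. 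More broadly, every transitivity claim in your sketch --- the Torelli group acting transitively on the fiber over a fixed spin structure via bounding-pair twists, and the ``Arf-class-merging'' twist when some $r_i$ is even --- rests on the precise formula for how a Dehn twist $T_c$ changes turning numbers, which you never state or verify; without it the argument is a plan rather than a proof. Finally, in the even case the cleaner statement is that the mod-2 reduction fails to be a quadratic refinement at the even puncture, so there is no Arf invariant to preserve in the first place; your proposed twist would then be the mechanism realizing transitivity, but it must be exhibited explicitly rather than asserted.
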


\bibliography{mybib}{}
\bibliographystyle{halpha}
\end{document}